\tikzset{My Line Style/.style={smooth}}
\newcommand{\kk}{\Bbbk}
\def\SL{\operatorname{SL}}
\def\SL2{\operatorname{SL}_{2}(\kk)}
\def\GL2{\operatorname{GL}_2}
\def\INVSL2{$\kk[V]^{operatorname{SL}_{2}(\kk)}$}
\def\INVSO2{$\kk[V]^{operatorname{SO}_{2}(\kk)}$}
\def\INVGL2{$\kk[V]^{operatorname{GL}_{2}(\kk)}$}
\def\Hom{\operatorname{Hom}}
\def\GL{\operatorname{GL}}
\def\SL{\operatorname{SL}}
\def\N{\mathbb{N}}
\newtheorem{Lemma}{Lemma}[section]
\newtheorem{Theorem}[Lemma]{Theorem}
\newtheorem{Corollary}[Lemma]{Corollary}
\newtheorem{Proposition}[Lemma]{Proposition}
\newtheorem{Conj}[Lemma]{Conjecture}
\theoremstyle{definition}
\theoremstyle{remark}
\newtheorem{eg}[Lemma]{Example}
\newtheoremstyle{Acknowledgments}
  {}
    {}
     {}
     {}
    {\bfseries}
    {}
     {.5em}
     {\thmname{#1}\thmnumber{ }\thmnote{ (#3)}}
\theoremstyle{Acknowledgments}
\newtheorem{ack}{Acknowledgments.}
\title{Some Formulae Relating Modular Representations of Elementary Abelian $p$-groups}
\author{Jonathan Elmer and Kazal Kadr}
\begin{document}
\maketitle

\begin{abstract}
 Let $p>0$ be a prime, $\kk$ a field of characteristic $p$ and $G$ an elementary abelian $p$-group of order $q=p^n$. Let $W$ be an indecomposable $\kk G$-module of dimension 2 and define $V_i = S^{i-1}(W)^*$ for each $i=1, \ldots, q$. We show that $V_2 \otimes V_i \cong V_{i+1} \oplus V_{i-1}$ provided $i$ is not divisible by $p$, and that $V_2 \otimes V_p$ is indecomposable provided $n>1$. Our results generalise results of Almkvist and Fossum \cite{AlmkvistFossum} for representations of cyclic groups of order $p$. We show how our results give formulae for the direct sum decompostion of $V_i \otimes V_j$ for all $i<p$ and $j<q$ modulo summands projective to $M:= \bigoplus_{r=0}^{p^{n-1}}V_{rp}$, and conjecture that these formulae extend to the case $i<q$ and $j<q$. We provide some evidence for our conjecture. 
\end{abstract}

\section{Introduction} Let $p>0$ be a prime, $\kk$ a field of characteristic $p$ and $G$ an elementary abelian $p$-group of order $q=p^n$. The representation theory of $G$ over $\kk$ is well-known to be wild unless $n=1$ or $q=4$. Outside of these cases it is believed we have no hope of classifying indecomposable representations of $G$ over $\kk$. Nevertheless, it is possible to describe relationships which exist between certain subclasses of the class of all such representations. The purpose of this paper is to describe some of these relationships, focussing on symmetric powers and tensor products.

We begin by fixing enough notation to state our results; precise definitions of the terms involved will be given in section 2. Let $W$ be an indecomposable $\kk G$-module with dimension 2. As $G$ is a $p$-group, there exists a basis of $W$ with respect to which the action of $G$ is upper triangular. More precisely, the action of $G$ on $W$ with respect to this basis is given by left-multiplication with the matrices
\[\left\{\begin{pmatrix} 1 & \omega \\ 0 & 1 \end{pmatrix}: \omega \in E_W\right\}\] where $E_W$ is a subgroup of $\kk$ under addition. If the action of $G$ is faithful we have $G \cong E_W$. Two such $\kk G$-modules $W, W'$ are isomorphic if and only if $E_W = \alpha E_W'$ for some $\alpha \in \kk$.

Fix an indecomposable faithful 2-dimensional $\kk G$-module $W$ and set $V_i = S^{i-1}(W)^*$ where $S$ denotes symmetric powers and $*$ the dual (or contragedient) module, and $i\leq q$. Our main result is the following:

\begin{Theorem}\label{tensorformula}\

\begin{itemize}
\item[(a)]$V_2 \otimes V_i \cong V_{i+1} \oplus V_{i-1}$ for all $i<q$ with $p \not | \ i$;

\item[(b)] $V_2 \otimes V_p$ is indecomposable if $n>1$.
\end{itemize}
\end{Theorem}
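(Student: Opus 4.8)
The plan is to derive both statements from a single short exact sequence. Since $\dim W=2$ we have $\wedge^3W=0$, so the Koszul complex of the polynomial ring $S(W)$ yields, for every $m\ge 0$, a short exact sequence of $\kk G$-modules
\[
0\longrightarrow S^{m-1}(W)\xrightarrow{\ \kappa\ }W\otimes S^m(W)\xrightarrow{\ \mu\ }S^{m+1}(W)\longrightarrow 0,
\]
where $\mu$ is multiplication and $\kappa$ the Koszul differential; here we use the isomorphism $\wedge^2W\cong\kk$ of $\kk G$-modules (the action on $\wedge^2W$ is through $\det$, which is trivial here). Dualising and using $(S^j(W))^*=V_{j+1}$, $V_2=W^*$, and $(M\otimes N)^*\cong M^*\otimes N^*$, this becomes, with $i=m+1$,
\[
0\longrightarrow V_{i+1}\longrightarrow V_2\otimes V_i\longrightarrow V_{i-1}\longrightarrow 0\qquad(i\ge 1).
\]
Both parts of the theorem are then statements about this one sequence: part (a) about when it splits, part (b) about the middle term in the boundary case $i=p$.

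For part (a) I would write down the polarisation map $\gamma\colon S^{m+1}(W)\to W\otimes S^m(W)$, given on monomials by $\gamma(e_1^ae_2^b)=a\,(e_1\otimes e_1^{a-1}e_2^b)+b\,(e_2\otimes e_1^ae_2^{b-1})$ for $a+b=m+1$. This is $\kk G$-linear (being $\GL(W)$-equivariant), and a one-line computation gives $\mu\circ\gamma=(m+1)\cdot\mathrm{id}_{S^{m+1}(W)}$. Hence if $p\nmid(m+1)$, i.e. $p\nmid i$, the map $(m+1)^{-1}\gamma$ is a $\kk G$-linear section of $\mu$, so $W\otimes S^m(W)\cong S^{m+1}(W)\oplus S^{m-1}(W)$; dualising gives $V_2\otimes V_i\cong V_{i+1}\oplus V_{i-1}$. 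The hypothesis $i<q$ is not needed for the isomorphism itself; it only guarantees that $V_{i-1}$ and $V_{i+1}$ are indecomposable, so that the right-hand side is literally a direct sum of indecomposables. This part should be routine.

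For part (b) the sequence with $i=p$ reads $0\to V_{p+1}\to M\to V_{p-1}\to 0$ with $M:=V_2\otimes V_p$, and now $\mu\circ\gamma=p\cdot\mathrm{id}=0$, so this section collapses. To show $M$ is indecomposable when $n>1$, I would first note that $M$ is self-dual ($M^*=V_2^*\otimes V_p^*\cong V_2\otimes V_p$, using $W\cong W^*$ and $S^{p-1}(W)^*\cong S^{p-1}(W)$ since $p-1<p$), that $V_{p+1}$ and $V_{p-1}$ are indecomposable (as $p\pm 1\le q$) with one-dimensional socle, and then compute $\operatorname{soc}(M)$. In the model $V_j\cong\kk[s]_{\le j-1}$ with $G=E_W$ acting by translation, one has $M\cong\kk[s]_{\le 1}\otimes\kk[r]_{\le p-1}$, and in the coordinates $u=s$, $w=r-s$ — in which $G$ translates $u$ only — $M$ is the span of the $u^a(u+w)^b$ with $a\le 1$, $b\le p-1$ inside $\kk[u,w]$, whose $G$-invariants are $\kk[w][c(u)]$ with $c(u)=\prod_{\omega\in E_W}(u-\omega)$ of degree $q$. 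Since every element of $M$ has $u$-degree at most $p<q$ — this is exactly where $n>1$ is used — one gets $M^G=M\cap\kk[w]=\mathrm{span}\{1,w\}$, so $\dim\operatorname{soc}(M)=2$, and by self-duality $\dim M/\operatorname{rad}(M)=2$ as well.

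Now suppose $M=A\oplus B$ with $A,B\ne 0$. Comparing socles and heads forces $A$ and $B$ to be indecomposable, cyclic, with simple socle, so every nonzero submodule of $A$, of $B$, of $V_{p+1}$ or of $V_{p-1}$ contains the relevant (one-dimensional) socle; hence no two nonzero submodules of any of these meet trivially. Applying this to the submodule $V_{p+1}\hookrightarrow M$ shows one of its projections to $A,B$ is injective — say $V_{p+1}\hookrightarrow A$, so $\dim A\ge p+1$ and $q|_B\colon B\to V_{p-1}$ is injective, where $q\colon M\to V_{p-1}$ is the quotient map — and dualising $M\twoheadrightarrow V_{p-1}$ (using self-duality) yields an embedding $V_{p-1}\hookrightarrow M$ which likewise lands in $A$ or in $B$. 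In the first sub-case one checks $M\cong V_{p+1}\oplus V_{p-1}$, i.e. the sequence splits; in the second, $A$ contains copies of both $V_{p+1}$ and $V_{p-1}$, and the composite $V_{p-1}\hookrightarrow A\xrightarrow{q|_A}V_{p-1}$ is non-invertible (an isomorphism would split $V_{p-1}$ off $A$ since $\ker(q|_A)\ne 0$, $\dim A>\dim V_{p-1}$, contradicting indecomposability of $A$), which must be turned into a contradiction — helped by the further exact sequence $0\to V_p\to M\to V_p\to 0$ (tensor $0\to\kk\to V_2\to\kk\to 0$ with $V_p$), which forces a copy of $V_p$ into $A$ too and tightens the dimension count. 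The main obstacle is this last step: ruling out both the splitting of $0\to V_{p+1}\to M\to V_{p-1}\to 0$ (equivalently, that the connecting class in $\Ext^1_{\kk G}(V_{p-1},V_{p+1})$ is nonzero) and the configuration where $V_{p+1}$ and $V_{p-1}$ lie in a single summand. I expect to settle both by explicit computations in the model above — of $\Hom_{\kk G}(S^p(W),\,W\otimes S^{p-1}(W))$, showing $\mathrm{id}_{S^p(W)}$ is not in the image of post-composition with $\mu$, and of $\Hom_{\kk G}(V_{p\pm 1},V_{p\mp 1})$ together with the submodule lattices of $V_{p\pm1}$ — the rest being bookkeeping.
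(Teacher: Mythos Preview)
Your part~(a) is correct and takes a genuinely different route from the paper. The paper works entirely in coordinates: it writes down explicit elements $w_i=y_i+z_{i-1}$ and $v_i=(i+1)y_{i+1}+(i+1-m)z_i$ of $V_2\otimes V_m$, checks by binomial manipulations that they span submodules $Y\cong V_{m+1}$ and $X\cong V_{m-1}$, and then verifies $X\cap Y=0$ when $p\nmid m$ by a support argument. Your Koszul/polarisation argument packages the same content conceptually: the exact sequence comes for free from $\wedge^3 W=0$, and $\mu\circ\gamma=(m+1)\cdot\id$ gives the splitting in one line. What the explicit approach buys is that the submodule $Y\cong V_{m+1}$ is visible inside $V_2\otimes V_m$ for \emph{all} $m$, which the paper then reuses for~(b); in your language this is just the injection $V_{p+1}\hookrightarrow V_2\otimes V_p$ from the dual Koszul sequence, so nothing is lost.

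Your part~(b) contains a real error, and this is where the proposal goes off the rails rather than merely being incomplete. You assert that $V_{p+1}$ has one-dimensional socle and then use this to force one of the projections $V_{p+1}\to A$, $V_{p+1}\to B$ to be injective. But $\dim V_{p+1}^G=2$: in the basis $x_0,\ldots,x_p$ with action~\eqref{dualaction} both $x_p$ and $x_{p-1}$ are fixed, since $\binom{p}{p-1}\equiv 0\bmod p$. Indeed your own socle computation already shows this: you find $\dim\operatorname{soc}(M)=2$, and since $V_{p+1}$ is a submodule of $M$ its socle lies inside $\operatorname{soc}(M)$; the two spaces coincide, so $\operatorname{soc}(M)\subset V_{p+1}$ and $V_{p+1}$ has two-dimensional socle. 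Thus there is no obstruction from socles to $V_{p+1}$ sitting diagonally across $A\oplus B$, and the first step of your case analysis (``say $V_{p+1}\hookrightarrow A$, so $\dim A\ge p+1$'') does not follow. The argument you sketch after that point is already contingent on this false step, so the ``explicit computations'' you defer would have to do essentially everything.

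The paper avoids all of this by invoking Benson--Carlson \cite[Theorem~2.1]{BensonCarlson}: because $p\mid\dim V_p$, every indecomposable summand of $V_2\otimes V_p$ has dimension divisible by~$p$. Since the total dimension is $2p$, the only alternative to indecomposability is two summands of dimension~$p$ each, and the paper rules this out using the $(p{+}1)$-dimensional submodule $Y\cong V_{p+1}$ already constructed. This divisibility constraint is the missing idea; it replaces your attempted submodule-lattice analysis of $A$, $B$, $V_{p\pm1}$ with a single external theorem.
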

 
Let $M:= \sum_{r=1}^{p^{n-1}}V_{rp}$. We can use Theorem \ref{tensorformula} to describe certain other tensor products $V_i \otimes V_j$ up to the addition of modules which are projective relative to $M$, i.e. direct summands of a tensor multiple of $M$. Our most complete results are:

\begin{Corollary}\label{newtensorformula}
Let $i<p$ and $j<q$. Assume $i+j \leq q$. Write $j = rp+ j'$ and assume that $i+j' \leq p$. Then we have
\begin{equation}\label{newtensorbyVj} V_i \otimes V_j \cong_M \bigoplus_{l=1}^{\min(i,j')} V_{j+i-(2l-1)}\end{equation}
\end{Corollary}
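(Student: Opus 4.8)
The plan is to prove \eqref{newtensorbyVj} by induction on $i$, using Theorem \ref{tensorformula}(a) as the engine. The base case $i=1$ is trivial since $V_1$ is the trivial module, so $V_1 \otimes V_j \cong V_j$, which matches the right-hand side (the sum has a single term $l=1$ giving $V_{j+1-1}=V_j$). For the inductive step, I would use the standard symmetric-power identity $V_2 \otimes V_i \cong V_{i+1} \oplus V_{i-1}$ — valid here because $i<p$ forces $p \nmid i$ — to write $V_{i+1} \cong (V_2 \otimes V_i) \ominus V_{i-1}$ in an appropriate Grothendieck-group or "modulo relatively projective summands" sense, and then compute
\begin{equation*}
V_{i+1}\otimes V_j \;\cong_M\; (V_2 \otimes V_i \otimes V_j) \ominus (V_{i-1}\otimes V_j).
\end{equation*}

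Next I would feed in the inductive hypotheses for $i$ and $i-1$. Assuming the hypotheses on indices ($i+j'\le p$ together with $i<p$, $i+j\le q$) pass down to $i$ and $i-1$ — which they do, since decreasing $i$ only relaxes the constraints — we get $V_i\otimes V_j \cong_M \bigoplus_{l=1}^{\min(i,j')}V_{j+i-(2l-1)}$ and similarly for $i-1$. Tensoring the first with $V_2$ and applying Theorem \ref{tensorformula}(a) termwise requires knowing that each index $j+i-(2l-1)$ appearing is \emph{not} divisible by $p$; here the bookkeeping is that $j+i-(2l-1) = rp + (j'+i-(2l-1))$ with $1 \le j'+i-(2l-1) \le p-1$ under our hypotheses, so indeed $p \nmid (j+i-2l+1)$, and the formula $V_2 \otimes V_{j+i-2l+1} \cong V_{j+i-2l+2}\oplus V_{j+i-2l}$ applies. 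Substituting and telescoping the resulting double sum against the $V_{i-1}\otimes V_j$ term should leave exactly $\bigoplus_{l=1}^{\min(i+1,j')}V_{j+(i+1)-(2l-1)}$, completing the induction.

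The main obstacle is precisely the interaction between the $\cong_M$ relation and the tensor operation. One must check that $\cong_M$ behaves like equality in a suitable quotient of the Green ring — that tensoring both sides of a $\cong_M$-relation by a fixed module preserves it (a summand of $X\otimes M$ tensored with $V_2$ is a summand of $X \otimes M \otimes V_2$, which is still "projective relative to $M$" in the required sense), and that one may cancel common $\cong_M$-classes. This should follow from the definition of relative projectivity and the fact that $M$ is closed under the relevant operations, but it is the step that needs care rather than the index arithmetic. A secondary point to verify is that the "exceptional" summands $V_{rp}$ (divisible by $p$), for which Theorem \ref{tensorformula}(a) fails and (b) produces an indecomposable of dimension $2p$, never arise on the right-hand side within the stated index range — this is exactly what the hypothesis $i+j'\le p$ buys us, and it is worth stating explicitly as the reason the clean formula holds only in this range.
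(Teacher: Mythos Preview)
Your proposal is correct and follows essentially the same route as the paper: induction on $i$ with base cases $i=1,2$, evaluation of $V_2\otimes V_i\otimes V_j$ in two ways via Theorem~\ref{tensorformula}(a) and the inductive hypotheses for $i$ and $i-1$, the check that every intermediate index $j+i+1-2l$ lies strictly between $rp$ and $rp+p$ (hence is coprime to $p$), and cancellation of common summands. The paper makes your ``telescoping'' step explicit by a three-way case split $i>j'$, $i<j'$, $i=j'$ --- in the last case the extra term $V_{j+i-2\min(i,j')}=V_{rp}$ is discarded as projective relative to $M$ --- and it justifies your $\ominus$-manoeuvre by an appeal to Krull--Schmidt in the relative stable category; these are exactly the two points you flagged as needing care.
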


\begin{Corollary}\label{newtensorbyVp-j}
Let $i<p$ and $j<q$. Assume $i+j \leq q$. Write $j = rp+ j'$ and assume that $i+j' \geq p$. Then we have
\[V_{p-i} \otimes V_{j} \cong_M V_i \otimes V_{pr+p-j'}.\] where $M$ is as in Corollary \ref{newtensorformula}.
\end{Corollary}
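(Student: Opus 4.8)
The plan is to compute both products in the claim, modulo relatively $M$-projective summands, as explicit direct sums of the modules $V_{rp+m}$ with $1\le m\le p-1$, and then to observe that the two resulting formulae coincide because the Clebsch--Gordan range is invariant under the substitution $(i,j')\mapsto(p-i,p-j')$.

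First I would note that, since $i+j'\ge p$, \emph{one} of the two products is already covered by Corollary \ref{newtensorformula}. If $j'\le i$ then $(p-i)+j'\le p$, and (using $2i\ge i+j'\ge p$) also $(p-i)+(rp+j')\le q$, so Corollary \ref{newtensorformula} applies to $V_{p-i}\otimes V_j$; symmetrically, if $i\le j'$ then $i+(p-j')\le p$ and $i+(rp+p-j')\le q$, so it applies to $V_i\otimes V_{pr+p-j'}$. In either case the relevant side equals, modulo relatively $M$-projectives, $\bigoplus_m V_{rp+m}$ with $m$ running in steps of $2$ from $(i+j'-p)+1$ to $(p-1)-|i-j'|$. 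It therefore remains to show the other, ``overflow'' product has this same decomposition.

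For the overflow product $V_a\otimes V_j$ (with $a\in\{i,p-i\}$, so $2\le a\le p-1$) I would induct on $a$. By Theorem \ref{tensorformula}(a) we have $V_2\otimes V_{a-1}\cong V_a\oplus V_{a-2}$, so $V_a\otimes V_j$ is the summand of $V_2\otimes(V_{a-1}\otimes V_j)$ complementary to $V_{a-2}\otimes V_j$; thus modulo relatively $M$-projectives $V_a\otimes V_j\cong_M(V_2\otimes(V_{a-1}\otimes V_j))\ominus(V_{a-2}\otimes V_j)$. By the inductive hypothesis $V_{a-1}\otimes V_j$ and $V_{a-2}\otimes V_j$ are, modulo relatively $M$-projectives, direct sums of modules $V_{rp+m}$ with $1\le m\le p-1$; and since $p\nmid rp+m$ and $rp+m<q$ for such $m$, Theorem \ref{tensorformula}(a) gives $V_2\otimes V_{rp+m}\cong V_{rp+m+1}\oplus V_{rp+m-1}$, whose boundary summands $V_{rp}$ (when $m=1$) and $V_{(r+1)p}$ (when $m=p-1$) are direct summands of $M$ — here one uses $r\ge 1$, resp.\ $r+1\le p^{n-1}$, both of which follow from $j=rp+j'<q$ — and hence are $\cong_M 0$. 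So modulo relatively $M$-projectives the operation ``$V_2\otimes-$'' acts on sums of the $V_{rp+m}$ exactly as the adjacency operator of the $A_{p-1}$ Dynkin diagram, with the ends at $0$ and $p$ absorbing. Unwinding this recursion as in the proof of Corollary \ref{newtensorformula} gives $V_a\otimes V_j\cong_M\bigoplus_m V_{rp+m}$, the sum running in steps of $2$ over $|a-j'|+1\le m\le\min(a+j'-1,\,2p-a-j'-1)$. Specialising to $(a,j')=(p-i,j')$ and to $(a,j')=(i,p-j')$ yields two index sets with the common lower bound $|(p-i)-j'|+1=|i-(p-j')|+1=(i+j'-p)+1$ and the common upper bound $\min((p-i)+j'-1,\,2p-(p-i)-j'-1)=(p-1)-|i-j'|=\min(i+(p-j')-1,\,2p-i-(p-j')-1)$; together with the easy side from Corollary \ref{newtensorformula} this gives $V_{p-i}\otimes V_j\cong_M V_i\otimes V_{pr+p-j'}$.

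The step I expect to be the main obstacle is the bookkeeping hidden in ``unwinding the recursion'': one must check that after the alternating cancellation coming from $\ominus(V_{a-2}\otimes V_j)$ no summand outside the block $\{V_{rp+1},\dots,V_{rp+p-1}\}$ survives modulo $M$, and that every $V_{rp+m}$ in the asserted range occurs with multiplicity exactly one. A clean way to organise both points is to pass to the Grothendieck group of the quotient of the category of $\kk G$-modules by the relatively $M$-projective modules: on the span of $[V_{rp+1}],\dots,[V_{rp+p-1}]$ the class $[V_a]$ becomes a Chebyshev-type polynomial in the $A_{p-1}$ adjacency matrix, whose products with $[V_{rp+j'}]$ are multiplicity-free with exactly the stated support; one then upgrades this numerical identity to an honest $\cong_M$ using that, by the induction, both products are direct sums of the indecomposables $V_{rp+m}$ together with relatively $M$-projective modules.
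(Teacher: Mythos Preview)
Your approach is sound but genuinely different from the paper's. You aim to establish a single closed formula $V_a\otimes V_{rp+b}\cong_M\bigoplus_m V_{rp+m}$, valid for all $1\le a,b\le p-1$, with $m$ ranging in steps of $2$ over $|a-b|+1\le m\le\min(a+b-1,\,2p-a-b-1)$, and then read off the corollary from the manifest invariance of this range under $(a,b)\mapsto(p-a,p-b)$. The paper, by contrast, never computes either side explicitly: it runs a direct double induction (outer on $i$, with an inner induction on $j'$ for the base case $i=1$), at each step expanding $V_2\otimes V_{p-i+1}\otimes V_j$ (respectively $V_{p-1}\otimes V_2\otimes V_{j-1}$) in two different orders and cancelling the common summand via Krull--Schmidt in the relative stable category. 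The paper's route is shorter and entirely sidesteps the boundary bookkeeping you correctly flag as the main obstacle; your route yields strictly more, since it in effect proves Corollaries~\ref{newtensorformula} and~\ref{newtensorbyVp-j} simultaneously as one ``reflected Clebsch--Gordan'' rule, and the $A_{p-1}$-adjacency\,/\,Chebyshev picture makes the $(a,b)\mapsto(p-a,p-b)$ symmetry conceptual rather than a computational coincidence. One small slip: $r\ge1$ does not follow from $j<q$ (we may have $r=0$), but this is harmless since then the boundary summand is $V_0=0$.
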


In the above two Corollaries, $\cong_M$ means isomorphism in the stable module category relative to $M$.

\begin{ack}
This paper contains the most substantial results of the second author's PhD thesis. The second author would like to thank Middlesex University for their support. Both authors thank Brendan Masterson and Emilio Pierro for useful conversations in the university's algebra reading group and elsewhere.
\end{ack}

\section{Background}
\subsection{Exterior and Symmetric Powers}

In this section let $G$ be a group, $\kk$ a field and $V, W$ finite dimensional vector spaces on which $G$ acts linearly. Let $x_1, \ldots, x_n$ be a basis for $V$ and let $y_1, \ldots, y_m$ be a basis for $W$. The tensor product $V \otimes W$ is the vector space with basis consisting of the formal symbols $x_i \otimes y_j: i=1, \ldots,n$, $j=1, \ldots, m$, and $G$ acts on $V \otimes W$ via linear extension of $g(x_i \otimes y_j) = gx_i \otimes gy_j$. One can take the tensor product of $V$ with itself to obtain the tensor square $T^2(V) = V \otimes V$, and iterate this procedure to obtain a graded algebra $T(V) = \bigotimes_{d \geq 0} T^d(V)$ where $T^0(V)$ is the trivial module and $T^{d+1}(V) = V \otimes T^d(V)$ for all $d \geq 0$.

The symmetric group $S_n$ acts naturally on $T^d(V)$ by place permutation, and its action commutes with that of $G$. We define the symmetric powers to be the covariants of this action, i.e.
\[S^d(V) = \frac{T^d(V)}{\langle \sigma(v)-v: v \in T^d(V), \sigma \in S_n\rangle},\]
with induced $G$-action. Denote the image of $x_{i_1} \otimes \ldots \otimes x_{i_n}$ under this quotient by $x_{i_1} \cdots x_{i_n}$. Then the induced $G$-action is given by
\[g(x_{i_1} \cdots x_{i_n}) = (gx_{i_1}) \cdots g(x_{i_n})\] and the set $\{x_{i_1} \cdots x_{i_n}: 1 \leq i_1 \leq \cdots \leq i_n \leq d\}$ is a basis for $S^d(V)$; in particular the dimension of $S^d(V)$ is $\binom{n+d}{d}$. We note that $x_ix_j = x_jx_i$ for all $1 \leq i,j \leq n$.

Notice that by basic representation theory we have
\[D^d(V):= T^d(V)^{S_n} \cong S^d(V)^*.\] The modules $D^d(V)$ are sometimes called divided powers, but in the present article we will not use this term and these modules will be denoted as $S^d(V)^*$ rather than $D^d(V)$. 

Finally we define the exterior powers
\[\Lambda^d(V) = \frac{T^d(V)}{I},\] where $I$ is the 2-sided ideal of $T^d(V)$ generated by all tensors of the form $v \otimes v, v \in V$, with induced $G$-action. Denote the image of  $x_{i_1} \otimes \ldots \otimes x_{i_n}$ under this quotient by $x_{i_1} \wedge \cdots \wedge x_{i_n}$. Then the induced $G$-action is given by
\[g(x_{i_1} \wedge \cdots \wedge x_{i_n}) = g(x_{i_1}) \wedge \cdots \wedge g(x_{i_n})\] and the set $\{x_{i_1} \wedge \cdots \wedge x_{i_n}: 1 \leq i_1 < \cdots < i_n \leq d\}$ is a basis for $\Lambda^d(V)$; in particular the dimension of $\Lambda^d(V)$ is $\binom{n}{d}$. We note that $x_i \wedge x_j = -x_j \wedge x_i$ for all $1 \leq i,j \leq n$ and $x_i \wedge x_i = 0$. We also note that $\Lambda^d(V) = 0$ for $d>n$ and $\Lambda^n(V)$ is a one-dimensional module on which $G$ acts via the determinant character. Further, the natural pairing $\Lambda^{n-d}(V) \times \Lambda^d(V) \rightarrow \Lambda^n(V)$ induces an isomorphism
\[ \Lambda^{n-d}(V) \cong \Lambda^d(V)^*.\]

\subsection{Projective modules and the Heller operator}

In this section we let $G$ be an arbitrary group of order divisible by $p$. Recall that a $\kk G$-module $P$ is said to be projective if every exact sequence of $\kk G$-modules
\begin{equation}\label{Pproj} 0 \longrightarrow X \longrightarrow Y \longrightarrow P \longrightarrow 0\end{equation}
splits. It is said to be injective if every exact sequence of $\kk G$-modules
\[0 \longrightarrow P \longrightarrow Y \longrightarrow X \longrightarrow 0\]
splits. One can show that a $\kk G$-module is injective if and only if it is surjective.

Let $M$ be another $\kk G$-module. The $\kk G$-module is said to be {\bf projective relative to $M$} of the exact sequence \eqref{Pproj} splits whenever the tensored exact sequence
\begin{equation}\label{PXrelproj} 0 \longrightarrow M \otimes X \longrightarrow M \otimes Y\longrightarrow M \otimes P \longrightarrow 0\end{equation}
splits. Injective relative to $M$ is defined analogously and one can show that the two concepts are equivalent. Consider the special case where $M = \sum_{H \in \chi} \kk_H \uparrow^G_H$ where $\kk_H$ denotes the trivial indecomposable $\kk H$-module and $\chi$ is a set of subgroups of $G$. Then as a consequence of Frobenius' reciprocity theorem, \eqref{PXrelproj} splits if and only if \eqref{Pproj} splits on restriction to each $H \in \chi$. In this situation $P$ is usually said to be projective relative to $\chi$ rather than relative to $M$. Note that $P$ is then projective relative to the trivial subgroup of $G$ if and only if it is projective, and every $\kk G$-module is projective relative to $G$ itself.

The following omnibus Lemma sums up what we need to know about relative projectivity:

\begin{Lemma}\label{higman}
Let $G$ be a finite group of order divisible by $p$,  and $P,M$  $\kk G$-modules. Then the following are equivalent:
\begin{enumerate}
\item[(i)] $P$ is projective relative to $M$;
\item[(ii)] $P$ is injective relative to $M$;
\item[(iii)] $P$ is a direct summand of $X \otimes M$ for some $\kk G$-module $X$.
\end{enumerate}
\end{Lemma}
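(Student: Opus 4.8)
The plan is to prove Lemma~\ref{higman} by establishing the cycle of implications (iii) $\Rightarrow$ (i) $\Rightarrow$ (ii) $\Rightarrow$ (iii), exploiting the self-duality present in the tensor-product formulation of relative projectivity. First I would prove (iii) $\Rightarrow$ (i), which is Higman's criterion in its classical form: if $P$ is a direct summand of $X \otimes M$, then any short exact sequence $0 \to A \to B \to P \to 0$ whose $M$-tensor splits can be shown to split by the standard averaging/transfer argument. Concretely, since $P \mid X \otimes M$, it suffices to show $X \otimes M$ is projective relative to $M$; and for the sequence $0 \to A \to B \to P \to 0$ one tensors with $X$, observes the hypothesis forces $0 \to X \otimes M \otimes A \to X \otimes M \otimes B \to X \otimes M \otimes P \to 0$ to split (as it is obtained from the $M$-tensored sequence by tensoring further with $X$), then uses a splitting of the $(X\otimes M)$-tensored sequence together with the fact that $P$ is a summand of $X \otimes M$ to produce a splitting of the original. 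This is the step that carries the real content.

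Next I would address the duality implications (i) $\Leftrightarrow$ (ii). The key observation is that an exact sequence $0 \to A \to B \to P \to 0$ is ``injective-relative-to-$M$''-type data whose dual $0 \to P^* \to B^* \to A^* \to 0$ is ``projective-relative-to-$M$''-type data, and that tensoring with $M$ commutes with dualizing up to the natural isomorphism $(M \otimes Y)^* \cong M^* \otimes Y^*$. Since for a finite group $\kk G$ is a symmetric (indeed Frobenius) algebra, a sequence of $\kk G$-modules splits if and only if its $\kk$-dual splits, because splitting is detected by surjectivity of $\Hom_{\kk G}(-, A)$ or $\Hom_{\kk G}(B,-)$ applied appropriately, and dualizing is exact and faithful. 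Thus $P$ is projective relative to $M$ iff $P^*$ is injective relative to $M^*$; combined with the observation that for a finite group every module $M$ satisfies $M \mid X \otimes M^*$-type relations is not quite needed — instead I would phrase (i) $\Leftrightarrow$ (ii) directly: replacing $M$ by $M^*$ throughout and dualizing every sequence interchanges the two definitions, and since the statement of the Lemma quantifies over all $\kk G$-modules $M$ we are free to do this, but to keep $M$ fixed one uses that $M$ and $M^*$ generate the same relative-projectivity class (both are summands of $M \otimes M^* \otimes M$, say). It is cleaner to simply prove (i) $\Rightarrow$ (iii) and (ii) $\Rightarrow$ (iii) and close the loop, which I do below.

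For (i) $\Rightarrow$ (iii): assuming $P$ is projective relative to $M$, I would consider the evaluation/multiplication surjection $\mu: M \otimes M^* \otimes P \twoheadrightarrow P$ and, more usefully, the canonical surjection obtained from the counit $M^* \otimes M \to \kk$; tensoring with $P$ gives a surjection $\varepsilon: M^* \otimes M \otimes P \to P$. One checks this surjection becomes split after tensoring with $M$ — because $M \otimes M^* \otimes M$ retracts onto $M$ via the standard triangle identities for the adjunction $M \otimes -\dashv M^* \otimes -$ — so by relative projectivity of $P$ the surjection $\varepsilon$ itself splits, exhibiting $P$ as a direct summand of $M^* \otimes M \otimes P = X \otimes M$ with $X = M^* \otimes P$. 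The implication (ii) $\Rightarrow$ (iii) is the dual argument, using an injection $P \hookrightarrow M \otimes M^* \otimes P$ split after tensoring with $M$. Finally I would remark, for the application to $M = \sum_{H \in \chi} \kk_H \!\uparrow^G_H$, that Frobenius reciprocity identifies the $M$-split condition with restriction-splitting, recovering the classical notion of $\chi$-projectivity; but this remark is not part of the Lemma proper.

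The main obstacle is the bookkeeping in (iii) $\Rightarrow$ (i): one must be careful that ``the $M$-tensored sequence splits'' is used correctly — it is the genuinely asymmetric hypothesis — and that the splitting of the $X \otimes M$-tensored sequence (which follows formally, since that sequence is the $X$-tensor of the $M$-tensored one, hence split) can be transported back along the retraction $X \otimes M \to P$ without introducing any averaging that would need $|G|$ invertible. The point is that everything is formal module theory over $\kk G$ once the split hypotheses are in hand; no character-theoretic or cohomological input is required, and in particular the proof works in the modular case. I would present the triangle-identity computations for the adjunction $(M \otimes -, M^* \otimes -)$ in a single line each rather than spelling out the coherence diagrams.
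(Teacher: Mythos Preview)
The paper does not prove Lemma~\ref{higman}; it is quoted without proof as a background fact (the standard generalisation of Higman's criterion to projectivity relative to a module). So there is no paper argument to compare yours against.

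Your strategy is the standard one and is correct. The cleanest way to organise what you wrote is: for every $\kk G$-module $Y$ the map $\varepsilon_Y:=\mathrm{ev}\otimes\id_Y:M^*\otimes M\otimes Y\to Y$ becomes split after tensoring with $M$, the section being $\mathrm{coev}\otimes\id_{M\otimes Y}$ by the triangle identity $(\id_M\otimes\mathrm{ev})\circ(\mathrm{coev}\otimes\id_M)=\id_M$. This gives (i)$\Rightarrow$(iii) at once: $\varepsilon_P$ splits by hypothesis, so $P\mid (M^*\otimes P)\otimes M$. For (iii)$\Rightarrow$(i) one first checks that $\varepsilon_{X\otimes M}$ is \emph{genuinely} split --- a section is obtained from $\mathrm{coev}$ together with the symmetry, feeding the free copy of $M$ on the right into the second tensor slot --- hence so is $\varepsilon_P$ for any summand $P$ of $X\otimes M$; then, given an $M$-split epimorphism $f:B\to P$ with $M$-section $\tau:M\otimes P\to M\otimes B$, the composite
\[
P\xrightarrow{\;s\;}M^*\otimes M\otimes P\xrightarrow{\;\id\otimes\tau\;}M^*\otimes M\otimes B\xrightarrow{\;\varepsilon_B\;}B
\]
is a section of $f$ by naturality of $\varepsilon$. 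The implications involving (ii) are strictly dual, using the coevaluation $P\to M\otimes M^*\otimes P$ instead.

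One caution: your verbal description of (iii)$\Rightarrow$(i) (``tensor the sequence with $X$, then use the retraction $X\otimes M\to P$ to produce a splitting'') is loose --- taken literally the maps do not compose, and a careless implementation of this idea produces only $fs=\dim_\kk(M)\cdot\id_P$ rather than $\id_P$, which is fatal when $p\mid\dim_\kk(M)$. The fix is exactly the symmetry move above, which makes the triangle identity (rather than the trace $\mathrm{ev}\circ\mathrm{coev}'$) govern the composite. You already flag this as the delicate point, so this is a warning about presentation rather than a gap in the plan.
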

 
One may construct a category whose objects are $\kk G$-modules, with the set of morphisms between objects $X$ and $Y$ given by
\[\mathrm{Hom}_{\kk G, M}(X,Y) = \Hom_{\kk G}(X,Y)/\mathrm{PHom}_{\kk G}(X,Y)\] where $\mathrm{PHom}_{\kk G}(X,Y)$ denotes the linear space of all homomorphisms factoring through a module which is projective relative to $M$. In this category objects $X, Y$ are isomorphic if and only if there exist modules $P, Q$ which are projective relative to $M$ such that $X \oplus P \cong Y \oplus Q$. We write $P \cong_M Q$ for isomorphisms in this category. This is called the {\bf stable module category relative to $M$}. In case $M = \sum_{H \in \chi} \kk_H \uparrow^G_H$ we call it the stable module category relative to $\chi$, and if $\chi$ consists of the trivial subgroup simply the stable module category. The category has a triangulated structure, which is well-known in the case of the stable module category. However, in this article we will not make use of that structure.

Given a $\kk G$-module $M$, we can find a projective module $P_0$ and a surjective $\kk G$-morphism
\[\pi_0: P_0 \rightarrow M.\]
If we choose $P_0$ and $\pi_0$ so that $P_0$ has smallest possible dimension, then this pair is unique, and known as the projective cover of $M$. The kernel of $\pi_0$ is denoted $\Omega(M).$ This is known as the Heller shift of $M$. $\Omega(-)$ can be viewed as an operation on the set of $\kk G$-modules which takes indecomposable modules to indecomposable modules. The construction can be iterated, defining $\Omega^{i+1}(M) = \Omega(\Omega^{i}(M))$ for any $i \in \N$. Dually, 
given a $\kk G$-module $M$, we can find an injective (projective) module $I_0$ and a injective $\kk G$-morphism
\[\tau_0: M \rightarrow I_0.\]
If we choose $I_0$ and $\tau_0$ so that $I_0$ has smallest possible dimension, then this pair is unique, and known as the injective hull of $M$. The cokernel of $\tau_0$ is denoted $\Omega^{-1}(M).$ This construction can be iterated, defining $\Omega^{-(i+1)}(M) = \Omega^{-1}(\Omega^{-i}(M))$ for any $i \in \N$.
We also define $\Omega^0(M)$ to be the largest non-projective direct summand of $M$. With these conventions, we have the following proposition summarizing useful properties of the functors $\Omega^i$:

\begin{Proposition}\label{omegaomnibus} Let $M_1, M_2$ be $\kk G$-modules without projective summands, and $i, j$ nonzero integers. Then:

\begin{enumerate}
\item[(i)] $\Omega^i(M_1 \oplus M_2) \cong \Omega^i(M_1) \oplus \Omega^i(M_2)$;
\item[(ii)] $\Omega^i(M)^* \cong \Omega^{-i}(M^*)$;
\item[(iii)] $M \cong_{1} \Omega(\Omega^{-1}(M))  \cong_{1}  \Omega^{-1}(\Omega(M))$.
\item[(iv)] $\Omega(M_1) \otimes M_2 \cong_{1} \Omega(M_1 \otimes M_2)$.
\end{enumerate}
\end{Proposition}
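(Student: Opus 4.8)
The plan is to assemble all four statements from a short list of standard facts about the self-injective algebra $\kk G$, reducing (i) and (ii) to the cases $i=\pm 1$ by induction. The facts I would invoke are: (1) $\kk G$ is a Frobenius algebra, so a finite-dimensional $\kk G$-module is projective if and only if it is injective; (2) every finite-dimensional $\kk G$-module has a projective cover and an injective hull, each unique up to isomorphism and characterised among surjections from (resp.\ injections into) projective modules by having source (resp.\ target) of minimal dimension; (3) the Schanuel-type fact that in any short exact sequence $0 \to A \to P \to B \to 0$ with $P$ projective one has $A \cong \Omega(B) \oplus P'$ for some projective $P'$, equivalently $A \cong_{1} \Omega(B)$, together with its dual for injective hulls; and (4) if $P$ is projective and $N$ is any $\kk G$-module then $P \otimes N$ is projective, since $\kk G \otimes N$ is free of rank $\dim_{\kk} N$.

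For (i), a finite direct sum of projective covers is a projective cover of the direct sum, and likewise for injective hulls, so by the uniqueness in (2) we get $\Omega^{\pm 1}(M_1 \oplus M_2) \cong \Omega^{\pm 1}(M_1) \oplus \Omega^{\pm 1}(M_2)$ whenever $M_1, M_2$ have no projective summands; since $\Omega^{\pm 1}$ then again produces modules without projective summands, the general case follows by induction on $|i|$. For (ii), I would apply the exact contravariant functor $(-)^*$ to a minimal projective presentation $0 \to \Omega(M) \to P_0 \to M \to 0$, obtaining $0 \to M^* \to P_0^* \to \Omega(M)^* \to 0$ in which, by (1), $P_0^*$ is injective and of the same dimension as $P_0$; by (2) this is the injective hull of $M^*$, so $\Omega(M)^* \cong \Omega^{-1}(M^*)$. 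Iterating, using (i) to remain within the class of modules without projective summands, gives the claim for $i>0$; the case $i<0$ is symmetric, and $\Omega^0(M)^* \cong \Omega^0(M^*)$ because $(-)^*$ preserves projectivity and direct sum decompositions.

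For (iii), from the injective hull $0 \to M \to I_0 \to \Omega^{-1}(M) \to 0$ and the fact from (1) that $I_0$ is projective, fact (3) gives $M \cong_{1} \Omega(\Omega^{-1}(M))$, and dually $M \cong_{1} \Omega^{-1}(\Omega(M))$. For (iv), tensoring the projective cover $0 \to \Omega(M_1) \to P_0 \to M_1 \to 0$ with $M_2$ preserves exactness, and $P_0 \otimes M_2$ is projective by (4), so fact (3) yields $\Omega(M_1) \otimes M_2 \cong \Omega(M_1 \otimes M_2) \oplus (\text{projective})$, that is, $\Omega(M_1) \otimes M_2 \cong_{1} \Omega(M_1 \otimes M_2)$. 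The only step needing genuine care is fact (3): one compares an arbitrary projective presentation of $B$ with the minimal one via Schanuel's lemma and then splits off exactly a projective complement, which uses the Krull--Schmidt property of $\kk G$-modules together with the definition of $\Omega$ as the non-projective part of a first syzygy. Granting this, everything else is bookkeeping with dimensions and the conventions surrounding $\Omega^0$.
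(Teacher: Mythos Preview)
The paper does not actually prove this proposition: it is stated as a summary of standard properties of the Heller operator, with no argument given (the only follow-up remark points the reader to \cite{ElmerRelCoh} and \cite{Lassueur} for the relative versions). So there is nothing in the paper to compare your argument against.

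Your proof plan is the standard one and is correct. The one place where you are a little quick is in (ii), where you assert that the dual of a \emph{minimal} projective presentation of $M$ is automatically the \emph{minimal} injective presentation of $M^*$. This is true, but it uses more than just (1) and (2) as you have stated them: you need that $(-)^*$ carries projectives to projectives (equivalently injectives to injectives), which for $\kk G$ follows from the fact that $\kk G$ is a symmetric algebra, so $(\kk G)^* \cong \kk G$ as $\kk G$-modules. Granting that, minimality transfers because a strictly smaller injective hull of $M^*$ would dualize to a strictly smaller projective cover of $M$. You might make that explicit. Everything else---the Schanuel argument for (iii), the use of $\kk G \otimes N$ being free for (iv), and the induction on $|i|$---is exactly as one would expect.
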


A version of the above also exists for relative projectivity and for projectivity relative to a module, but we will not make use of these; see for instance \cite{ElmerRelCoh} and \cite{Lassueur} for precise statements.

\subsection{Representations of cyclic groups of order $p$}

In this section let $G$ denote a cyclic group of order $p$, and $\kk$ a field of characteristic $p$. The representation theory of $G$ is easy to understand, and in particular not wild. There exists exactly one indecomposable representation $V_i$ for each dimension $i<p$. The action of a generator of $G$ on $V_i$ is given by left-multiplication by a single Jordan block of size $i$. Each $V_i$ has fixed-point space $V_i^G$ with dimension 1. So, one may determine the number of indecomposable summands in an arbitrary representation of $G$ by simply computing the dimension of its fixed point space, and knowing the dimensions of its indecomposable summands is tantamount to knowing its complete decomposition.

It is now immediately obvious that
\begin{equation}\label{dual} V_i^* \cong V_i
\end{equation}
since a representation of any group is indecomposable if and only if its dual is. Further, let $v_1, v_2, \ldots, v_i$ be a basis of $V_i$ with respect to which the action of $G$ is as described above. For any $1 \leq j \leq i$, the elements $v_j, v_{j+1}, \ldots, v_i$ span a submodule of $V_i$ isomorphic to $V_{i-j+1}$, and the quotient with respect to this submodule is isomorphic to $V_{j-1}$ (here $V_0$ denotes the zero module).  In particular, there is a surjective homomorphism $\pi: V_p \rightarrow V_i$ for any $i \leq p$ whose kernel is a submodule of $V_p$ isomorphic to $V_{p-i}$.  Noting that $V_p$ is the unique projective indecomposable module for this group,   this shows that
 \begin{equation}\label{heller} \Omega(V_i) \cong V_{p-i}
\end{equation}
for any $1 \leq i \leq p$. In particular, the modules $V_i$ with $1 \leq i<p$ are periodic with period 2.

It is also easy to see that
\begin{equation}\label{symV} S^{i-1}(V_2) \cong V_i
\end{equation}
 for any $1 \leq i \leq p$.

Tensor products of the modules $V_i$ were first considered by Almkvist and Fossum \cite{AlmkvistFossum}. The fundamental result is
\begin{equation}\label{tensorbyV2} V_i \otimes V_2 \cong \left\{ \begin{array}{lr} V_{i+1} \oplus V_{i-1}&  i<p\\ V_p \oplus V_p & i=p \end{array} \right.  
\end{equation}
See \cite[Lemma~2.3.1]{BensonVecBook} for a straightforward proof. 

Using this result and induction one can prove that provided $i+j \leq p$, we have
\begin{equation}\label{tensorbyVj} V_i \otimes V_j \cong \bigoplus_{l=1}^{\min(i,j)} V_{j+i-(2l-1)}\end{equation} and
\begin{equation}\label{tensorbyVp-j} V_{p-i} \otimes V_{p-j} \cong (p-i-j) V_p \oplus (V_i \otimes V_j).\end{equation} 
Together these formulae tell us the direct sum decomposition of $V_i \otimes V_j$ for any $i,j \leq p$.

Almkvist and Fossum also considered symmetric and exterior powers. They devised a calculus by which one could compute decompositions of $S^d(V)$ and $\Lambda^d(V)$ for any $d$ and for any representation $V$ of $G$. The full theory is quite involved; the interested reader should consult \cite[Section~2.8]{BensonVecBook}. Some easily stated highlights are as follows:

\begin{Proposition}\label{projectivity} $S^d(V_{i+1})$ is projective whenever $d+i \geq p$.
\end{Proposition}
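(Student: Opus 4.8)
The plan is to translate projectivity into a numerical statement and then set up an induction that isolates a single boundary family. Since $\kk G\cong \kk[t]/(t^p)$ with $t=\sigma-1$ for a generator $\sigma$, a finite-dimensional $\kk G$-module $M$ is projective if and only if it is free, if and only if it has no indecomposable summand $V_j$ with $j<p$, if and only if $p\mid\dim M$ and $\dim M^G=\dim M/p$. For $M=S^d(V_{i+1})$ we have $\dim M=\binom{d+i}{d}$, and when $1\le d,i\le p-1$ with $d+i\ge p$ Kummer's theorem gives $p\mid\binom{d+i}{d}$ (the single-digit sum $d+i$ carries in base $p$); so the divisibility is automatic and the point is to rule out any summand $V_j$ with $j<p$. (We work in the range $1\le i\le p-1$, $1\le d\le p-1$ in which the proposition is used; the case $i=0$ is excluded since $S^d(V_1)=V_1$, and the case $i=p-1$ is immediate because then $V_{i+1}=V_p$, so $S^d(V_p)$ is a direct summand of $V_p^{\otimes d}$ and hence projective by Lemma~\ref{higman}.)

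For the induction, work inside the polynomial ring $\kk[V_{i+1}]=\bigoplus_{d\ge 0}S^d(V_{i+1}^*)$: its degree-one part $V_{i+1}^*\cong V_{i+1}$ (by \eqref{dual}) has a one-dimensional $G$-fixed socle spanned by a nonzerodivisor $x_0$ of degree one, and a direct check on the Jordan block gives $\kk[V_{i+1}]/(x_0)\cong\kk[V_i]$. Reading multiplication by $x_0$ in degree $d$ yields a short exact sequence
\[ 0\longrightarrow S^{d-1}(V_{i+1})\xrightarrow{\ \cdot x_0\ } S^{d}(V_{i+1})\longrightarrow S^{d}(V_i)\longrightarrow 0, \]
and, $\kk G$ being self-injective, such a sequence splits whenever one of its outer terms is projective. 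Now induct on $d+i$: if $d+i\ge p+1$ then, since $d,i\le p-1$, both $d\ge 2$ and $i\ge 2$; the two outer terms are $S^{d-1}(V_{i+1})$ and $S^d(V_i)=S^d(V_{(i-1)+1})$, each of the form covered by the proposition with its two parameters lying in $[1,p-1]$ and summing to $d+i-1\ge p$, so by the inductive hypothesis both are projective, the sequence splits, and $S^d(V_{i+1})\cong S^{d-1}(V_{i+1})\oplus S^d(V_i)$ is projective. This reduces the proposition to the boundary cases $d+i=p$: it remains to prove that $S^{p-i}(V_{i+1})$ is projective for $2\le i\le p-2$.

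The boundary cases are where I expect the real work to lie; this is in essence the Almkvist--Fossum calculus for symmetric powers of indecomposable $\ZZ/p$-modules, \cite[Section~2.8]{BensonVecBook}. Here $d=p-i\le p-1<p$, so $(p-i)!$ is invertible and $S^{p-i}(V_{i+1})$ is the trivial-isotypic direct summand of $V_{i+1}^{\otimes(p-i)}$. The tensor power itself decomposes into indecomposables by iterating \eqref{tensorbyV2}--\eqref{tensorbyVp-j} (the identity $V_2\otimes V_j\cong V_{j+1}\oplus V_{j-1}$ turns the classes $[V_j]$ into a Chebyshev-type sequence, and once a $V_p$ appears all further tensoring yields projectives), and in view of $p\mid\binom{p}{i}=\dim S^{p-i}(V_{i+1})$ it suffices to establish the single equality $\dim S^{p-i}(V_{i+1})^G=\tfrac1p\binom{p}{i}$, which then forces $S^{p-i}(V_{i+1})$ to be a sum of copies of $V_p$. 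Extracting exactly this much from the decomposition of $V_{i+1}^{\otimes(p-i)}$ is, however, equivalent to identifying each Schur-functor summand $\mathbb S_\lambda(V_{i+1})$ ($\lambda\vdash p-i$) as a $\ZZ/p$-module -- precisely the computation carried out in \cite{AlmkvistFossum}; a self-contained alternative is to run $S^{p-i}(V_{i+1})$ through the Koszul complex with terms $\Lambda^j(V_{i+1})\otimes S^{p-i-j}(V_{i+1})$ and use self-injectivity once more to reduce to projectivity of those terms, but I do not expect this to be genuinely shorter.
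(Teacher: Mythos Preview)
The paper does not give its own proof of this proposition; it is quoted as one of the ``easily stated highlights'' of the Almkvist--Fossum theory, with a pointer to \cite[Section~2.8]{BensonVecBook}. So there is no paper proof to compare against, and the question is whether your argument stands on its own.

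Your reduction step is correct and worth keeping: the multiplication-by-$x_0$ sequence
\[0 \longrightarrow S^{d-1}(V_{i+1}) \longrightarrow S^d(V_{i+1}) \longrightarrow S^d(V_i) \longrightarrow 0\]
is a sequence of $\kk G$-modules, and together with self-injectivity of $\kk G$ the downward induction on $d+i$ cleanly reduces the statement (in the range $1\le d,i\le p-1$, which is the correct reading since the proposition as literally stated fails for $d\ge p$) to the boundary line $d+i=p$.

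The gap is that you then do not prove the boundary case. You say it is ``in essence the Almkvist--Fossum calculus'' and defer to exactly the reference the paper already cites, so your proposal is incomplete precisely where the content lies. Your Koszul suggestion does not close the gap either: in the exact complex with terms $\Lambda^j(V_{i+1})\otimes S^{p-i-j}(V_{i+1})$, the extreme term $\Lambda^{p-i}(V_{i+1})$ has dimension $\binom{i+1}{p-i}$, which is typically not divisible by $p$ (for instance $i=2$, $p=5$ gives dimension~$1$, so $\Lambda^3(V_3)\cong V_1$). Thus not all the ``other'' terms are projective, and the complex does not force $S^{p-i}(V_{i+1})$ to be projective.

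If you are willing to quote Proposition~\ref{symmetryexterior} (which the paper likewise only cites), the boundary is a one-liner: for $d+i=p$ it gives $S^{p-i}(V_{i+1})\cong\Lambda^{p-i}(V_p)$, and the right-hand side is a direct summand of $V_p^{\otimes(p-i)}$, hence projective by Lemma~\ref{higman}. Whether this counts as an independent proof depends on how Proposition~\ref{symmetryexterior} is established; in the original source the two facts are developed together, so you should check that you are not arguing in a circle.
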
  

\begin{Proposition}\label{periodicity} $S^d(V_{i+1}) \cong S^{d'}(V_{i+1})$ modulo projective summands, where $d'$ is the remainder when $d$ is divided by $p$;
\end{Proposition}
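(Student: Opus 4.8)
My plan would be to reduce the statement to a periodicity relating $S^{d+p}(V_{i+1})$ to $S^{d}(V_{i+1})$, and then to use the explicit determination of the $\kk G$-module structure of symmetric powers due to Almkvist and Fossum \cite{AlmkvistFossum} (see also \cite[\S2.8]{BensonVecBook}); throughout, $G$ denotes the cyclic group of order $p$. \emph{Reduction.} It is enough to prove $S^{d+p}(V_{i+1}) \cong S^{d}(V_{i+1})$ modulo projective summands for every $d\ge 0$, since iterating this and using that $V_p$ is the unique indecomposable projective $\kk G$-module brings the exponent down to its residue $d'$ modulo $p$. Writing $R = S^{\bullet}(V_{i+1}) = \kk[x_0,\dots,x_i]$ with a generator $g$ of $G$ acting as the algebra automorphism induced by a single Jordan block on $V_{i+1}$, the claim is equivalent to $R_{d+p}\cong R_d\oplus(\text{free }\kk G\text{-module})$, i.e.\ the Jordan type of $g$ on $R_{d+p}$ is that of $g$ on $R_d$ together with some additional blocks of size $p$.

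\emph{The computation.} I would pass to the representation ring $a(G) = \bigoplus_{j=1}^{p}\ZZ[V_j]$ and its quotient $\bar{a}(G) = a(G)/([V_p])$, in which two modules have the same class precisely when they agree modulo projectives, and form the symmetric-power series $\sigma_t(V) = \sum_{d\ge 0}[S^d(V)]\,t^d$. The case $i=1$ is classical: $S^d(V_2)\cong V_{(d\bmod p)+1}\oplus\lfloor d/p\rfloor\,V_p$ (cf.\ \cite[\S2.8]{BensonVecBook}), so that
\[ \sigma_t(V_2)\equiv\frac{\sum_{r=0}^{p-1}[V_{r+1}]\,t^r}{1-t^p}\pmod{[V_p]}, \]
the coefficient of $t^d$ depending only on $d\bmod p$. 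For general $i$, the recursion $[V_{i+1}]\equiv[V_2]\,[V_i]-[V_{i-1}]\pmod{[V_p]}$ — a direct consequence of \eqref{tensorbyV2} — identifies $\bar{a}(G)$ with $\ZZ[x]/(P_p(x))$, generated by $x=[V_2]$, where $P_1=1$, $P_2(x)=x$ and $P_{j+1}=xP_j-P_{j-1}$ (so $[V_j]=P_j([V_2])$). Almkvist and Fossum equip $\bar{a}(G)$ with a $\lambda$-ring structure and compute $\sigma_t$ of the products $[V_2][V_i]$ that occur when one unfolds the recursion; the upshot is that, modulo $[V_p]$, $\sigma_t(V_{i+1})$ is a rational function of the shape $N_i(t)/(1-t^p)$ with $\deg N_i < p$. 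Reading off the coefficient of $t^d$ then yields $[S^d(V_{i+1})]\equiv[S^{d\bmod p}(V_{i+1})]\pmod{[V_p]}$, which is the assertion.

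\emph{The main obstacle.} The real work lies in computing $\sigma_t$ on a tensor product. In contrast to direct sums, where $\sigma_t(A\oplus B)=\sigma_t(A)\sigma_t(B)$, there is no elementary formula for $\sigma_t(A\otimes B)$: it is governed by the universal polynomials of the $\lambda$-ring structure, and in particular $\sigma_t(V_2\otimes V_i)$ is \emph{not} $\sigma_t(V_2)\sigma_t(V_i)$; nor does the characteristic-zero Clebsch--Gordan identity $V_2\otimes S^d(V_2)\cong S^{d+1}(V_2)\oplus S^{d-1}(V_2)$ survive into characteristic $p$ (it already fails at $d=p-1$: there $V_2\otimes S^{p-1}(V_2)=V_2\otimes V_p\cong 2V_p$, whereas $S^{p}(V_2)\oplus S^{p-2}(V_2)\cong V_1\oplus V_{p-1}\oplus V_p$). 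Making this tractable inside $\bar{a}(G)$ is precisely the content of the Almkvist--Fossum calculus. An alternative, seemingly more elementary route is to induct on $i$ using the $G$-stable filtration of $R_d=S^d(V_{i+1})$ by the submodules $x_0^{\,d-l}S^l(V_{i+1})$, whose successive subquotients are $S^l(V_i)$ for $0\le l\le d$; but this merely relocates the difficulty, because the filtration is genuinely non-split — already $S^d(V_2)$ is a highly non-trivial iterated extension (for instance, for $p=3$, $i=1$, $d=3$ one has $S^3(V_2)\cong V_1\oplus V_3$, not a direct sum of its four trivial subquotients) — so one would still have to prove that the pertinent extension classes depend on $d$ with period $p$, which comes down once more to a Lucas-type analysis of binomial coefficients modulo $p$.
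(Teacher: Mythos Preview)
The paper does not give its own proof of this proposition: it is presented in Section~2.3 as one of several ``easily stated highlights'' of the Almkvist--Fossum calculus, with the reader referred to \cite{AlmkvistFossum} and \cite[Section~2.8]{BensonVecBook} for details. Your proposal is therefore not competing against an argument in the paper but rather unpacking the very reference the paper cites, and it does so accurately: the reduction to period~$p$, the passage to the quotient representation ring $\bar a(G)=a(G)/([V_p])$, the identification of $[V_j]$ with Chebyshev-type polynomials in $[V_2]$ via \eqref{tensorbyV2}, and the conclusion that $\sigma_t(V_{i+1})\equiv N_i(t)/(1-t^p)$ in $\bar a(G)[[t]]$ are exactly the ingredients of the Almkvist--Fossum argument as recounted in Benson's book.

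Your candour about the ``main obstacle'' is well placed and, if anything, is the most useful part of the write-up: the step from knowing $[V_{i+1}]$ as a polynomial in $[V_2]$ to knowing $\sigma_t(V_{i+1})$ genuinely requires the $\lambda$-ring machinery (or an equivalent explicit Jordan-block count), and you correctly point out that the na\"ive shortcuts --- multiplicativity of $\sigma_t$ over tensor products, or splitting the $x_0$-adic filtration --- both fail. As a self-contained proof the proposal is therefore incomplete in exactly the same way the paper's treatment is: both ultimately say ``see Almkvist--Fossum for the computation''. If you wanted to close the gap without invoking the full calculus, the most direct route is the explicit Jordan-type formula for $g$ acting on $\kk[x_0,\dots,x_i]_d$ (Almkvist--Fossum's generating-function identity, or equivalently Glasby--Praeger--Xia's combinatorial description), from which the period-$p$ dependence on $d$ is read off via Lucas' theorem; but that is again the content of the cited reference rather than a new idea.
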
  

\begin{Proposition}\label{symmetryexterior} $S^d(V_{i+1}) \cong S^i(V_{d+1})$ for all $d < p$, and $S^d(V_{i+1})  \cong \Lambda^d(V_{i+d})$ if in addition $d+i \leq p$.
\end{Proposition}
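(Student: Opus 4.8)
The plan is to establish the two isomorphisms separately, in both cases by exhibiting an explicit $\kk G$-isomorphism at the level of bases rather than by a character-theoretic argument (which is unavailable in the modular setting). For the first claim, $S^d(V_{i+1}) \cong S^i(V_{d+1})$, I would realise both sides concretely. Recall $V_{i+1} = S^i(V_2)^*$ by \eqref{symV} and the definition $V_j = S^{j-1}(W)^*$; but for this internal statement it is cleanest to work with a fixed basis $x_1, x_2$ of $W$ on which a generator acts by a single Jordan block, so that $S^i(W)$ has basis the monomials $x_1^a x_2^b$ with $a+b = i$. A monomial of degree $d$ in $S^d(S^i(W))$ is then a product of $d$ such monomials; equivalently, a monomial in $T^{di}(W)$ symmetric under the obvious $S_d \wr S_i$-type action. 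The key observation is that the $\kk$-span of degree-$(di)$ monomials in $x_1, x_2$ carries commuting symmetric-group actions, and that passing to $S_i$-invariants then $S_d$-covariants versus $S_d$-invariants then $S_i$-covariants yields, after dualising appropriately, the stated isomorphism. Concretely, I expect to build a bijection between a basis of $S^d(V_{i+1})$ and a basis of $S^i(V_{d+1})$ — both are indexed by the same combinatorial data (essentially partitions fitting in a $d \times i$ box, or equivalently by the exponent of a single suitable monomial) — and then check this bijection is $G$-equivariant by tracking the action of a generator through the Jordan block.

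For the second claim, $S^d(V_{i+1}) \cong \Lambda^d(V_{i+d})$ when $d+i \le p$, I would combine the first claim with the duality $\Lambda^{n-d}(V) \cong \Lambda^d(V)^*$ recorded in Section~2.1 together with \eqref{dual}. The idea is that $\Lambda^d(V_{i+d})$, being exterior-power data, is governed by strictly increasing sequences $1 \le j_1 < \cdots < j_d \le i+d$, and the standard ``subtract $(0,1,\ldots,d-1)$'' shift turns these into weakly increasing sequences $1 \le j_1' \le \cdots \le j_d' \le i+1$, which index a basis of $S^d(V_{i+1})$. So set-theoretically the bijection is immediate; the content is that it respects the $G$-action. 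Here the hypothesis $d + i \le p$ is essential: it guarantees we never ``wrap around'' the Jordan block of size $p$, so that the combinatorial shift is compatible with how a generator acts, and no projective summands intervene. I would verify equivariance by checking it on the action of a single generator $g$, writing $g = 1 + N$ with $N$ the nilpotent Jordan part, and comparing the effect of $N$ on $\wedge$-monomials versus $\cdot$-monomials under the shift.

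The main obstacle I anticipate is the second isomorphism, and specifically making the equivariance check clean rather than a morass of binomial-coefficient bookkeeping. The natural temptation is to induct using \eqref{tensorbyV2} or Pieri-type rules, but those only give equalities modulo projectives, which is weaker than the honest isomorphism claimed. So I would instead look for a single well-chosen generator of each module as a $\kk G$-module (or as a module over the group algebra, exploiting that $\kk G \cong \kk[N]/(N^p)$ is a truncated polynomial ring) and match up the two cyclic presentations directly — i.e. identify, on each side, a ``highest-weight'' basis element whose iterated images under $N$ span the whole module, and show the two annihilator ideals and the two sets of $N$-images correspond. Establishing that such a generator exists on the exterior side, and that the constraint $d+i\le p$ is exactly what makes its $N$-orbit have the right length, is where the real work lies; the first isomorphism and the dualities from Section~2.1 should then be comparatively routine.
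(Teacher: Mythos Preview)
The paper does not prove Proposition~\ref{symmetryexterior} at all: it is listed in Section~2.3 as one of several ``easily stated highlights'' of the Almkvist--Fossum calculus, with a pointer to \cite[Section~2.8]{BensonVecBook} for the argument. So your proposal is not competing against a proof in the paper but against the cited literature.

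On the substance, your dismissal of a character-theoretic route is slightly off. In the cyclic case $|G|=p$ the representation ring is completely determined by Jordan type, and the Almkvist--Fossum calculus is precisely a character-style method (effectively computing with lifted $\SL_2$-characters or Brauer characters). That is the route the references actually take, and it yields both isomorphisms without ever writing down a map, since for cyclic $G$ two modules are isomorphic as soon as their Jordan types agree.

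If you insist on an explicit isomorphism, a cleaner conceptual source than your hand-built bijections is the $\SL_2(\kk)$ approach the paper itself invokes in Section~2.4 to generalise this proposition (see Proposition~\ref{wildon}): the Wildon--McDowell modular Hermite reciprocity $S^i(S^d(E))^* \cong S^d(S^i(E)^*)$ and its companion $S^d(E_{i+1}) \cong \Lambda^d(E_{d+i})$ restrict to the upper-unitriangular subgroup, and combined with the self-duality $V_j \cong V_j^*$ for $j<p$ (equation~\eqref{dual}) give both statements. Your partition-conjugation and staircase-shift bijections are exactly the combinatorial shadows of these $\SL_2$-maps, but be warned that the naive basis bijection for Hermite reciprocity is \emph{not} $G$-equivariant on the nose --- getting the map right is a genuine part of the Wildon--McDowell work --- so the first isomorphism is not as routine as you suggest, and your caveat about the second is well placed.
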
  

\subsection{Known results for representations of $\SL_2(\kk)$}

In this subsection we consider the group $H = \SL_2(\kk)$ and let $E$ be the natural 2-dimensional $\kk H$-module. Assume $\kk$ is algebraically closed and denote $E_i = S^{i-1}(E)^*$. 

The modules $E_i$ for $i<p$ are irreducible and self-dual. Moreover, a complete classification of irreducible modules for $\SL_2(\kk)$ is given by the modules $L(\lambda): \lambda \in \N$ where $L(\lambda) = \bigotimes_{i=1}^k F^i(E_{\lambda_i})$ where $F$ denotes the Frobenius twist (i.e. $g \in G$ acts as $F(g)$ on
$F(E_i)$, where $F(g)$ is obtained from $g$ by raising each element of the matrix to the power $p$) and $\lambda = \lambda_k p^k + \ldots + \lambda_0$ is the $p$-adic decomposition of $\lambda$. For this reason, these modules are well-studied, and it is known (see for instance \cite[Theorem~3.1]{DotyHenke}) that $E_i \otimes E_2 \cong E_{i-1} \oplus E_{i+1}$. Now notice that for $i < p$ the $\kk G$-modules $V_i$ defined in the introduction are simply the restrictions of $E_i$ to some subgroup of the subgroup of upper triangular matrices in $H$. Since tensor products commute with restriction, we may obtain Theorem \ref{tensorformula} immediately in the special case $i<p$. However, the modules $E_i$ for $i \geq p$ are not so well-studied, and the rest of Theorem \ref{tensorformula} is new.

More recently, Wildon and McDowell \cite{WildonMcDowell} studied symmetric and exterior powers of $E$. In particular, they showed that $E_i \cong E_i^*$ if and only if $i<p$ or $i=p^k$ for some $k$. We may deduce that:
\begin{Proposition}\label{self-dual}
Suppose $i<p$ or $i=p^k \leq q$. Then $V_i \cong V_i^*$.
 \end{Proposition}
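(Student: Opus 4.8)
The plan is to deduce this directly from the result of Wildon and McDowell cited just above, namely that the $\SL_2(\kk)$-module $E_i = S^{i-1}(E)^*$ satisfies $E_i \cong E_i^*$ precisely when $i < p$ or $i = p^k$ for some $k$. The key observation, already noted in the paragraph preceding Proposition \ref{self-dual}, is that the $\kk G$-modules $V_i$ from the introduction are restrictions of the $E_i$ to a suitable subgroup $U$ of the upper-triangular subgroup of $\SL_2(\kk)$: indeed, fixing the faithful 2-dimensional $\kk G$-module $W$ amounts to fixing an embedding $G \hookrightarrow U$ with $E_W$ the image, and then $W \cong E^*|_G$ and hence $V_i = S^{i-1}(W)^* \cong (S^{i-1}(E^*)^*)|_G$. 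One must be slightly careful here because $W$ is built from $E$ rather than its dual, but since $E \cong E^*$ as $\SL_2(\kk)$-modules (the natural module of $\SL_2$ is self-dual via the symplectic form), we get $V_i \cong E_i|_G$ on the nose.

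First I would make the identification $V_i \cong E_i|_G$ precise, using $E \cong E^*$ and the fact that restriction commutes with $S^d(-)$ and with $(-)^*$ (all three operations are defined functorially on the underlying vector space with compatible group action). Second, given $i < p$ or $i = p^k \leq q$, I would invoke the Wildon--McDowell result to obtain an $\SL_2(\kk)$-isomorphism $E_i \cong E_i^*$. Third, restricting this isomorphism to $G$ yields $V_i \cong E_i|_G \cong (E_i^*)|_G \cong (E_i|_G)^* \cong V_i^*$, the middle steps again using that $(-)^*$ commutes with restriction. This completes the proof.

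There is essentially no obstacle: the entire content is imported from \cite{WildonMcDowell}, and the only thing to verify is the compatibility of restriction with the module operations involved, which is routine. The one point worth stating carefully in the writeup is why $i = p^k \leq q$ suffices rather than merely $i = p^k$ — but this is automatic, since we only ever consider $V_i$ for $i \leq q$, so the hypothesis is simply inherited. I would also remark that for $i < p$ the conclusion can alternatively be seen directly, since in that range $V_i$ is the restriction of the irreducible self-dual $\SL_2(\kk)$-module $E_i$, or even more elementarily because the analogous statement \eqref{dual} holds for cyclic groups of order $p$ and $V_i$ with $i<p$ restricts to such; but the uniform statement is cleanest via \cite{WildonMcDowell}.
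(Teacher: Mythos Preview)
Your proposal is correct and takes essentially the same approach as the paper: the paper simply writes ``We may deduce that'' immediately after stating the Wildon--McDowell result that $E_i \cong E_i^*$ iff $i<p$ or $i=p^k$, so the intended argument is exactly the restriction-from-$\SL_2(\kk)$ argument you spell out. Your write-up is in fact more careful than the paper's, in that you explicitly justify the identification $V_i \cong E_i|_G$ (using $E \cong E^*$ to cover both conventions) and note that restriction commutes with symmetric powers and duals; the paper leaves all of this implicit.
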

Wildon and McDowell also described isomorphisms
 \begin{equation}\label{hermite} S^i(S^{d}(E))^* \cong S^d(S^i(E)^*)
\end{equation} and \[S^i(S^{d}(E))^* \cong \Lambda^i(S^{i+d-1}(E)^*)\] for all $i$ and $d$. Combining the two gives us an isomorphism $S^d(E_{i+1}) \cong \Lambda^d(E_{d+i})$. 
Since exterior and symmetric powers commute with restriction we obtain a generalisation of Proposition \ref{symmetryexterior}:
\begin{Proposition}\label{wildon}
Let $d,i>0$. Then  $S^d(V_{i+1}) \cong S^i(V_{d+1}^*)^*$ for all $d,i < q$, and $S^d(V_{i+1})  \cong \Lambda^d(V_{i+d})$ if in addition $d+i \leq q$.
\end{Proposition}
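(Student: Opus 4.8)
The plan is to derive both isomorphisms from the $\SL_2(\kk)$-results of McDowell--Wildon quoted above by restricting them along an embedding of $G$, exactly as Proposition~\ref{self-dual} was deduced. First I would reduce to the case $\kk = \kbar$: every module appearing in the statement is built from $W$ using the functors $S^d(-)$, $\Lambda^d(-)$, $\otimes$ and $(-)^*$, each of which commutes with the base change $-\otimes_\kk\kbar$, and an isomorphism of finite-dimensional $\kk G$-modules may be tested after extending scalars to $\kbar$ (Noether--Deuring), so this is harmless. Then I would set up the embedding: since $W$ is faithful, the introduction identifies $G$ with the subgroup $E_W$ of $(\kk,+)$, and sending the group element indexed by $\omega\in E_W$ to the unipotent $2\times 2$ matrix with off-diagonal entry $\omega$ embeds $G$ in $H:=\SL_2(\kk)$ in such a way that the natural module $E$ of $H$ restricts to $W$. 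Because $S^{m-1}(-)$ and $(-)^*$ commute with restriction, this gives $V_m \cong \res^H_G E_m$ for all $m\le q$, where $E_m = S^{m-1}(E)^*$ as above.

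Next I would rewrite \eqref{hermite} and the combined isomorphism recorded just before the statement in the shape needed. Using $S^i(E)^* = E_{i+1}$ and $S^d(E) = E_{d+1}^*$ (the latter because $E$ is finite-dimensional), the isomorphism \eqref{hermite} becomes $S^i(E_{d+1}^*)^* \cong S^d(E_{i+1})$, i.e. $S^d(E_{i+1}) \cong S^i(E_{d+1}^*)^*$ as $\kk H$-modules for all $d,i\ge 1$; and the combined isomorphism reads $S^d(E_{i+1}) \cong \Lambda^d(E_{d+i})$ as $\kk H$-modules for all $d,i\ge 1$. Applying $\res^H_G$ to both, and using once more that $S^d(-)$, $\Lambda^d(-)$, $\otimes$ and $(-)^*$ all commute with restriction (so that $\res^H_G(E_{d+1}^*) = (\res^H_G E_{d+1})^* = V_{d+1}^*$, and similarly for the other terms), I obtain $S^d(V_{i+1}) \cong S^i(V_{d+1}^*)^*$ and $S^d(V_{i+1}) \cong \Lambda^d(V_{i+d})$.

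The only point requiring care is the range of indices, and this is bookkeeping rather than an obstacle: the first isomorphism needs $V_{i+1}$ and $V_{d+1}$ to be defined, which in the paper's convention ($V_m$ defined for $m\le q$) forces $i+1,d+1\le q$, i.e. exactly $d,i<q$; the second additionally needs $V_{i+d}$ to be defined, i.e. $d+i\le q$. Beyond that I do not expect any real difficulty — all of the mathematical content is contained in the McDowell--Wildon plethystic identities for $\SL_2(\kk)$, and what remains is a routine ``restrict and track the functors'' argument, the one mild subtlety being the reduction to the algebraically closed case.
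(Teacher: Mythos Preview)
Your proposal is correct and follows exactly the route taken in the paper: quote the McDowell--Wildon plethystic isomorphisms for $\SL_2(\kk)$, rewrite them in terms of the $E_m$, and restrict along the embedding $G\hookrightarrow\SL_2(\kk)$ so that $E_m$ restricts to $V_m$; the paper does this in the paragraph immediately preceding the proposition (``Since exterior and symmetric powers commute with restriction \ldots''). If anything you are more careful than the paper, since you make explicit the Noether--Deuring reduction to $\kk=\kbar$ needed to invoke the $\SL_2(\kk)$ results, and you track the index ranges.
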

They also show that the isomorphism \eqref{hermite} above is the unique modular analogue of classical Hermite reciprocity, in the sense that versions with stars in all other possible positions do not hold. In particular it is not true that $S^d(E_{i+1}) \cong S^i(E_{d+1})$ for all $d, i$. This suggests that if is likely not true that $S^d(V_{i+1}) \cong S^i(V_{d+1})$ for all $i,d<q$. In fact this is easy to see: take $q=4$, $i=2$ and $d=1$; then $S^d(V_{i+1}) = V_3$ and $S^2(V_2) = V_3^*$. One easily computes $\dim(V_3)^G=1$ and $\dim(V_3^*)=2$ so these modules cannot be isomorphic.

\section{Main results: elementary abelian $p$-groups}

In this section $G$ denotes an elementary abelian $p$-group of order $q:=p^n$, and $\kk$ a field of characteristic $p$.
We begin by defining bases for some of the modules were are interested in. Let $W$ be a faithful indecomposable $\kk G$-module of dimension 2 and let $\langle X,Y \rangle$ be a basis with respect to which the action of $G$ is as described in the introduction. Let $m>0$. A basis for $S^m(W)$ is given by $a_0, \ldots, a_m$ where $a_i = X^iY^{m-i}$. We identify $G$ with the appropriate subgroup $E_W$ of $\kk$. The action of $\alpha \in G$ on $S^m(W)$ is then given by the formula

\begin{equation}\label{action} \alpha \cdot a_i = \sum_{j=0}^i \binom{i}{j} \alpha^j a_{i-j}
\end{equation}

Note the formula is independent of $m$, and we have a chain of inclusions
\[S^0(W) \subset S^1(W) \subset S^2(W) \subset \ldots \]

Now let $x_0,\ldots, x_m$ be the basis of $S^m(W)^*$ dual to $a_0,\ldots, a_m$. The action of $G$ on $S^m(W)^*$ is given by the formula

\begin{equation}\label{dualaction} \alpha \cdot x_i  = \sum_{j=0}^{m-i}\binom{i+j}{i}(-\alpha)^j x_{i+j}.
\end{equation}

$S^m(W)^*$ and $S^m(W)$ are indecomposable for all $m<q$, and projective if $m=q-1$. We refer to \cite[Proposition~3.4]{ElmerSympowers} for a proof. We will have more to say in general about the modules $S^m(W)^*$ than their non-dualised counterparts, and we adopt the notation $V_m:= S^{m-1}(W)^*$, so that $V_m^* = S^{m-1}(W)$. 

The following results, which generalise Propositions \ref{projectivity} and \ref{periodicity}, are taken from \cite{ElmerSympowers}:

\begin{Proposition} Let $i<q, d<q$ and suppose that $i+d \geq q$. Then $S^d(V_{i+1})$ is projective relative to the set of subgroups of $G$ with order $\leq i$.
\end{Proposition}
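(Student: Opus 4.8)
The plan is to reduce to the cyclic case, apply the Almkvist--Fossum results (Propositions \ref{projectivity} and \ref{periodicity}), and reassemble the outcome using Higman's criterion (Lemma \ref{higman}). The computational heart is the behaviour of $W$, and hence of $S^d(V_{i+1})=S^d(S^i(W)^*)$, under restriction. Since $W$ is faithful, $W\downarrow_H$ is a faithful $2$-dimensional indecomposable $\kk H$-module for every subgroup $H\le G$, and because restriction commutes with symmetric powers and with dualising, $S^d(V_{i+1})\downarrow_H\cong S^d\big(S^i(W\downarrow_H)^*\big)$ is the module of the same shape over $H$. For a cyclic subgroup this becomes the $\kk[\ZZ/p]$-module $S^d(S^i(V_2)^*)$; combining \eqref{symV}, Proposition \ref{periodicity}, Proposition \ref{projectivity} and the fact that a symmetric power of positive degree of a projective $\kk[\ZZ/p]$-module is projective, one finds that $S^d(S^i(V_2)^*)$ is projective over $\ZZ/p$ precisely when $d+(i\bmod p)\ge p$. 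The numerical hypotheses $i,d<q$ and $i+d\ge q$ force $d+(i\bmod p)\ge p$: either $d\ge p$, or else $d<p$ and then $i\ge q-d$ makes the last base-$p$ digit of $i$ at least $p-d$.

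With this in hand I would induct on $n$. The case $n=1$ is exactly Proposition \ref{projectivity}: the only subgroup of order $\le i<p$ is the trivial one, and $d+i\ge p=q$ gives projectivity. For the inductive step I would control $S^d(V_{i+1})$ through its restrictions $S^d(V_{i+1})\downarrow_H$ to the maximal (index $p$) subgroups $H\le G$; each such restriction is the same construction carried out over the rank-$(n-1)$ group $H$, so --- after discarding the trivial case $i\ge p^{n-1}$ (where every $\kk H$-module is projective relative to $H$ itself) and, if necessary, reducing $d$ modulo $p^{n-1}$ using the rank-$(n-1)$ analogue of Proposition \ref{periodicity} --- it lies in the range $i,d<p^{n-1}$, $i+d\ge p^{n-1}$ covered by the inductive hypothesis, and is therefore projective relative to the subgroups of $H$ of order $\le i$. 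One then reassembles this local information, via Mackey's formula and Lemma \ref{higman}, into the statement that $S^d(V_{i+1})$ is projective relative to the subgroups of $G$ of order $\le i$.

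The hard part is precisely this reassembly. Relative projectivity with respect to a family of subgroups is a genuinely finer invariant than the collection of restrictions to maximal subgroups, or than the support variety (the vertex of an indecomposable summand is not read off from either), so knowing all the maximal restrictions is not, by itself, formally sufficient; one must produce an honest splitting realising $S^d(V_{i+1})$ as a direct summand of a module induced from subgroups of order $\le i$ --- most naturally by constructing, from the explicit bases \eqref{action}/\eqref{dualaction}, a filtration of $S^d(V_{i+1})$ by such induced submodules, with the freeness computed above guaranteeing that the relevant averaging maps are invertible where required. This is also the step that pins down the exact bound $i+d\ge q$ rather than a weaker inequality, and where the periodicity reduction on $d$ must be handled with care.
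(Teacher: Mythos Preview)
The paper does not prove this proposition at all: it is quoted verbatim from \cite{ElmerSympowers} (see the sentence ``The following results \ldots\ are taken from \cite{ElmerSympowers}'' immediately preceding the statement). There is therefore no in-paper proof to compare your attempt against.

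As for your proposal on its own terms: the numerics are fine (your reduction of $d$ modulo $p^{n-1}$ does land back in the range $i+d''\ge p^{n-1}$, since $d<p^n$ and $i<p^{n-1}$ force the quotient $\lfloor d/p^{n-1}\rfloor$ to equal $p-1$), and the observation that $S^d(V_{i+1})\downarrow_H$ is the ``same'' construction over $H$ is correct. But you have already put your finger on the real problem, and it is fatal to the argument as written. Projectivity relative to a family $\chi$ of subgroups is \emph{not} detected by restriction to maximal subgroups: there is no Chouinard-type theorem saying that if $V\downarrow_H$ is $\chi$-projective for every maximal $H\le G$ then $V$ is $\chi$-projective. (Even for ordinary projectivity over an elementary abelian $p$-group one must test on all cyclic \emph{shifted} subgroups, not merely on the finitely many honest subgroups of order $p$, and the relative version is no better.) So the inductive scheme, as stated, does not close: you have verified a necessary condition, not a sufficient one.

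Your final paragraph is essentially an admission of this --- you say one ``must produce an honest splitting'' via an explicit filtration and invertible averaging maps, but you do not produce it. That construction is the entire content of the result; the rest is bookkeeping. What remains is therefore a plan, not a proof, and the plan is pointing in the right direction (work with the explicit bases \eqref{action}/\eqref{dualaction} and build a concrete $\chi$-split filtration) but none of the actual work has been done. If you want to carry this out, the argument in \cite{ElmerSympowers} is the place to look.
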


\begin{Proposition} Let $i<q$, $d \in \N$ and let $d'$ denote the remainder when $d$ is divided by $q$. Then $S^{d}(V_{i+1}) \cong_{\chi} S^{d'}(V_{i+1})$, where $\chi$ is the set of subgroups of $G$ with order $\leq i$.
\end{Proposition}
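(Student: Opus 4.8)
The plan is to realise $\bigoplus_{d\ge 0}S^{d}(V_{i+1})$ as the coordinate ring $R:=\kk[S^{i}(W)]$ --- a polynomial ring in the $i+1$ variables $x_{0},\dots,x_{i}$ dual to the standard basis of $S^{i}(W)$ --- and to relate its graded pieces in degrees $d$ and $d+q$ by multiplication by a norm. It suffices to prove $S^{d+q}(V_{i+1})\cong_{\chi}S^{d}(V_{i+1})$ for every $d\ge 0$ (the case $d<q$ then being vacuous) and then iterate; the case $i=0$ is trivial, so assume $i\ge 1$. By \eqref{dualaction}, $x_{i}$ spans $V_{i+1}^{G}$ and the $G$-orbit of $x_{0}$ consists of the $q$ distinct linear forms $\alpha\cdot x_{0}=\sum_{j=0}^{i}(-\alpha)^{j}x_{j}$, $\alpha\in E_{W}$. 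Put $N:=\prod_{\alpha\in E_{W}}(\alpha\cdot x_{0})\in R_{q}$; then $N$ is $G$-invariant and, $R$ being a domain, a non-zerodivisor, so for each $d$ we get a short exact sequence of $\kk G$-modules
\[
0\longrightarrow S^{d}(V_{i+1})\xrightarrow{\ \cdot N\ }S^{d+q}(V_{i+1})\longrightarrow\bigl(R/NR\bigr)_{d+q}\longrightarrow 0 .
\]

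The main point is that $\bigl(R/NR\bigr)_{m}$ is projective relative to $\chi$ for every $m\ge 1$. The zero locus of $N$ is the union of the hyperplanes $H_{\alpha}=\ker(\alpha\cdot x_{0})$; since the coordinate vector of $\alpha\cdot x_{0}$ is $(1,-\alpha,\alpha^{2},\dots,(-\alpha)^{i})$, any $i+1$ of these forms are linearly independent (Vandermonde), so the arrangement is in general position, and $G$ permutes the $H_{\alpha}$ simply transitively. I would then use the Mayer--Vietoris (\v{C}ech) resolution of the coordinate ring of this arrangement,
\[
0\longrightarrow R/NR\longrightarrow\bigoplus_{|S|=1}\kk[H_{S}]\longrightarrow\bigoplus_{|S|=2}\kk[H_{S}]\longrightarrow\cdots\longrightarrow\bigoplus_{|S|=q}\kk[H_{S}]\longrightarrow 0,
\]
with $S$ ranging over subsets of $E_{W}$ and $H_{S}=\bigcap_{\alpha\in S}H_{\alpha}$ (exactness of this complex for a general-position arrangement is a point to be nailed down). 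The complex is $G$-equivariant, and its term indexed by $k$-element subsets decomposes as $\bigoplus_{[S]}\kk[H_{S}]\uparrow^{G}_{\Stab_{G}(S)}$ over orbit representatives; as $G$ acts freely on $E_{W}$, $\Stab_{G}(S)$ acts freely on $S$ and so has order dividing $|S|=k$. Hence for $k\le i$ this stabiliser lies in $\chi$ and the term is projective relative to $\chi$, while for $k>i$ general position forces $H_{S}=\{0\}$, so $\kk[H_{S}]$ is concentrated in degree $0$ and contributes nothing in degrees $\ge 1$. Thus for $m\ge 1$ the module $\bigl(R/NR\bigr)_{m}$ has a finite coresolution by modules projective relative to $\chi$; since projectivity and injectivity relative to $\chi$ coincide (Lemma~\ref{higman}) and the relative analogue of Proposition~\ref{omegaomnibus} is available, $\bigl(R/NR\bigr)_{m}$ is itself projective relative to $\chi$.

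Finally one must pass from the displayed short exact sequence to the relative stable isomorphism $S^{d+q}(V_{i+1})\cong_{\chi}S^{d}(V_{i+1})$, and I expect this to be the main obstacle: a cokernel that is merely projective relative to $\chi$ does not by itself force a relative stable isomorphism --- one needs the sequence to split on restriction to each $H\in\chi$, equivalently that $\cdot N\colon S^{d}(V_{i+1})|_{H}\to S^{d+q}(V_{i+1})|_{H}$ is a split injection of $\kk H$-modules. The natural route is: for each $H\in\chi$, choose a homogeneous system of parameters of $R$ of the shape $\{x_{i},N,f_{3},\dots,f_{i+1}\}$ with $f_{3},\dots,f_{i+1}\in R^{H}$ --- possible because the common zeros of $x_{i}$ and $N$ form a variety of dimension $i-1$ while the $H$-invariants cut out only the origin --- observe that $R$ is free over the polynomial subring these generate (Cohen--Macaulayness of $R$), and extract a graded $\kk H$-stable complement of $NR_{d}$ in $R_{d+q}$. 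Making that last extraction genuinely $H$-equivariant is the delicate part, since a naively chosen free basis of $R$ over the subring need not span an $H$-submodule; some additional input --- exploiting the grading or the explicit shape of $N$ --- is needed here, and this is where the real work of the proof lies.
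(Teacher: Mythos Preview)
The paper does not prove this proposition at all: it is quoted verbatim from \cite{ElmerSympowers} with the remark ``The following results\ldots are taken from \cite{ElmerSympowers}'', and no argument is supplied. So there is nothing here to compare your approach against. Your overall strategy --- multiplication by the orbit product $N=\prod_{\alpha\in G}(\alpha\cdot x_{0})$ to obtain a degree-shift short exact sequence, then controlling the cokernel --- is exactly the kind of argument that underlies the cited result, and your identification of the Vandermonde general-position property of the linear forms $\alpha\cdot x_{0}$ is correct and important.

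That said, your sketch has two genuine gaps, one of which you flag and one you do not. The one you flag is the passage from the short exact sequence to the stable isomorphism: you are right that relative projectivity of the cokernel alone is insufficient, and that one needs the sequence to be $\chi$-split. Your h.s.o.p.\ plan is plausible but, as you say, unfinished: freeness of $R$ over $\kk[x_{i},N,f_{3},\dots,f_{i+1}]$ does not come with an $H$-equivariant basis for free, and it is not clear how to manufacture one. The gap you do \emph{not} flag is in your argument that $(R/NR)_{m}$ is itself projective relative to $\chi$. You deduce this from the \v{C}ech coresolution together with ``the relative analogue of Proposition~\ref{omegaomnibus}'', but relative Heller shifts and the attendant dimension-shifting only work along $\chi$-\emph{split} exact sequences; a finite coresolution by relatively projective modules that is merely exact does not force the first term to be relatively projective. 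You would need to check that the \v{C}ech complex is $\chi$-split, which is a second splitting problem of the same flavour as the one you already acknowledge.

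In short: the outline is the right shape, but both the relative projectivity of $(R/NR)_{m}$ and the $\chi$-splitting of the multiplication-by-$N$ sequence require further argument before the proof is complete. Since the present paper offers no proof to compare with, you would need to consult \cite{ElmerSympowers} directly to see how these points are handled there.
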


In light of these results, we are given to wonder whether Equations \eqref{tensorbyV2}, \eqref{tensorbyVj}, \eqref{tensorbyVp-j}, and \eqref{heller}, may also be extended to the modules $V_1, \ldots, V_q$. Of course, since there are many $\kk G$-modules which are not isomorphic to any of these, we will usually need to give explicit isomorphisms, making the proofs considerably more involved.

\subsection{Heller Shift}

Dual to the inclusion $V_l^* \subset V_m^* $ one has the projection $\pi_{m,l}: V_m \rightarrow V_l$ given by setting $x_l, \ldots, x_{m-1}$ to zero. In particular this shows that the projective cover of each $V_m$ is $V_q$ and $\Omega(V_m) = \ker(\pi_{q,m})$. 

\begin{Proposition}\label{shift} We have, for $1 \leq m<q$, $$\Omega(V_m) \cong V_{q-m}^*.$$
\end{Proposition}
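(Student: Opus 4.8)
The plan is to construct the Heller shift explicitly by identifying $\Omega(V_m) = \ker(\pi_{q,m})$, where $\pi_{q,m}\colon V_q \to V_m$ is the projection killing $x_m, \ldots, x_{q-1}$, and then exhibiting an isomorphism between this kernel and $V_{q-m}^*$. Note $\ker(\pi_{q,m})$ is the subspace of $V_q$ spanned by $x_m, \ldots, x_{q-1}$, a module of dimension $q-m$; my job is to show its $G$-action matches that of $V_{q-m}^* = S^{q-m-1}(W)$, i.e. the action \eqref{action} on basis vectors $a_0, \ldots, a_{q-m-1}$. First I would write down, from \eqref{dualaction}, the action of $\alpha \in G$ on the basis vectors $x_m, \ldots, x_{q-1}$ of the submodule $\ker(\pi_{q,m}) \subset V_q$: this is $\alpha \cdot x_{m+r} = \sum_{s=0}^{q-1-m-r} \binom{m+r+s}{m+r} (-\alpha)^s x_{m+r+s}$, which already only involves the $x_i$ with $i \geq m$, confirming that this subspace is indeed a submodule.

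Next, I would guess the form of the isomorphism $\phi\colon V_{q-m}^* \to \ker(\pi_{q,m})$ as a "reversal with binomial/factorial twist" — essentially the same device used in the commented-out proof of $V_m \cong V_m^*$ for $m < p$, but now adapted. The natural ansatz is $\phi(a_r) = c_r\, x_{q-1-r}$ for scalars $c_r \in \kk$ (reversing the order of the basis, since duality pairs $S^{q-m-1}(W)$ with the "top" of $V_q$), with $c_r$ chosen as a ratio of factorials times a sign, $c_r = \pm (q-1-r)!\, r! / (\text{something})$, or more carefully a product that makes the binomial coefficients transform correctly. I would then verify that applying \eqref{action} to $a_r$ and then $\phi$ gives the same result as applying $\phi$ and then the restricted action \eqref{dualaction}; this reduces to a binomial coefficient identity of Vandermonde/reindexing type, which should be checked with care but is routine. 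One subtlety: since we are in characteristic $p$ and $q = p^n$, some factorials vanish, so the scalars $c_r$ must be chosen so that no division by zero occurs — this is why the exponents and factorial arguments all need to stay in the range $0, \ldots, q-1$ where we can reason via Lucas' theorem, or alternatively the twist should be expressed via binomial coefficients $\binom{q-1}{r}$ (which are all nonzero mod $p$ since $q-1 = p^n - 1$ has all base-$p$ digits equal to $p-1$) rather than bare factorials.

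The main obstacle I anticipate is pinning down the exact scalars $c_r$ and then pushing through the binomial identity cleanly in characteristic $p$, keeping track of signs $(-1)^{\bullet}$ correctly. A secondary point worth stating explicitly: one must confirm that $V_q$ really is the projective cover of $V_m$ — this follows since $V_q = S^{q-1}(W)^*$ is projective (stated in the excerpt, via \cite{ElmerSympowers}), the projection $\pi_{q,m}$ is surjective, and $V_m$ is indecomposable with a simple (one-dimensional) socle or top so that no proper summand of $V_q$ surjects onto it; in fact since $\dim V_q = q = |G|$, $V_q \cong \kk G$ is the unique indecomposable projective, hence automatically the projective cover of every indecomposable, and in particular of $V_m$. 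With that in hand, $\Omega(V_m)$ is by definition $\ker(\pi_{q,m})$ and the explicit isomorphism $\phi$ completes the proof.
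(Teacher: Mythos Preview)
Your plan is correct and matches the paper's approach: identify $\Omega(V_m)=\ker(\pi_{q,m})$ with basis $x_m,\ldots,x_{q-1}$, reverse the basis, and verify the action agrees with \eqref{action} via a binomial identity proved through Lucas' theorem. The one simplification you did not anticipate is that no twisting scalars are needed at all: the paper takes $y_k:=x_{q-1-k}$ (i.e.\ $c_r=1$), and the verification reduces directly to the congruence $\binom{q-1-k+j}{j}\equiv(-1)^j\binom{k}{j}\pmod p$, which is exactly the Lucas-type lemma you gestured at.
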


\begin{proof} Let $x_{m},x_{m+1}, \ldots, x_{q-1}$ be the basis of $\Omega(V_m) = \ker(\pi_{q,m})$. For each $k=0, \ldots, q-m-1$ define
\[y_k:= x_{q-1-k}.\] Then $y_0, \ldots, y_{q-m-1}$ is a basis for  $\Omega(V_m)$, and we have for any $\alpha \in G$
\begin{align*}
\alpha \cdot y_k &= \alpha \cdot x_{q-1-k}\\
&= \sum_{j=0}^k \binom{q-1-k+j}{j}(-\alpha)^j x_{q-1-k+j}\\
&= \sum_{j=0}^k  \binom{q-1-k+j}{j}(-\alpha)^j y_{k-j}\\
&= \sum_{j=0}^k (-1)^j  \binom{q-1-k+j}{j} \alpha^j y_{k-j}\\
&= \sum_{j=0}^k   \binom{k}{j} \alpha^j y_{k-j}\\
\end{align*}
by Lemma \ref{binomialmodp} below as required.
\end{proof}

In the above proof we used the following Lemma:

\begin{Lemma}\label{binomialmodp} Let $q=p^n$ be a prime power and let $i, j \in \N_0$ such that $j \leq k <q-1$. Then
\[\binom{q-1-k+j}{j} = (-1)^j\binom{k}{j} \mod p.\]
 \end{Lemma}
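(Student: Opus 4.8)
The plan is to prove the congruence $\binom{q-1-k+j}{j} \equiv (-1)^j \binom{k}{j} \pmod p$ by reducing it to the elementary fact that $q-1 \equiv -1 \pmod p$ in a sufficiently strong form. The cleanest route is to work with the upper-negation identity for binomial coefficients: over $\ZZ$ one has $\binom{-1}{j} = (-1)^j$, and more generally $\binom{a}{j}$ is a polynomial in $a$ of degree $j$, namely $\binom{a}{j} = \frac{a(a-1)\cdots(a-j+1)}{j!}$. Since $j \le k < q-1 \le q$, the denominator $j!$ is not divisible by $p$ (as $j < q = p^n$ forces $j! $ to be a unit mod... wait, that is false for $j \ge p$). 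So I must be more careful: $j!$ need not be invertible mod $p$. Instead I would avoid dividing by $j!$ and argue directly on the integer-valued polynomial.

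First I would rewrite $\binom{q-1-k+j}{j} = \binom{(q-1)-(k-j)}{j}$ and expand using the Chu–Vandermonde / polynomial identity $\binom{x+m}{j} = \sum_{l} \binom{x}{l}\binom{m}{j-l}$ with $x = q-1$ and $m = -(k-j)$; but signs in $m$ are awkward, so better: use $\binom{q-1-k+j}{j} = \binom{q-1-k+j}{q-1-k}$ and observe that $q-1-k+j \le q-1$ since $j \le k$, while $q-1-k \ge 0$. Now the key step: for $0 \le b \le a \le q-1$ with $q = p^n$, I claim $\binom{a}{b} \equiv (-1)^b \binom{q-1-a+b}{b} \pmod p$ — no, that is circular. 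Let me instead use Lucas' theorem directly: write everything in base $p$. We have $q - 1 = (p-1, p-1, \ldots, p-1)$ in base $p$ (all $n$ digits equal to $p-1$). For $0 \le k < q-1$, write $k$ in base $p$; then $q-1-k$ has base-$p$ digits $(p-1-k_{n-1}, \ldots, p-1-k_0)$ with no borrowing, since each $k_i \le p-1$. Similarly, since $j \le k$ — hmm, $j \le k$ does not immediately give digitwise domination.

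So the honest approach is: prove the identity first as a polynomial identity over $\ZZ$, then reduce mod $p$. Over $\ZZ$, I want $\binom{q-1-k+j}{j} \equiv (-1)^j\binom{k}{j}$. Consider the generating function $(1+t)^{q-1-k+j}$ versus the expansion; alternatively, the cleanest genuinely-true integer statement is: $\binom{N-k+j}{j} = (-1)^j \sum_{i=0}^{j}\binom{N+1}{i} \cdot (\text{something})$... this is getting complicated. The right tool is: mod $p$, $(1+t)^q = 1 + t^q$ as polynomials, hence $(1+t)^{q-1} = \frac{1+t^q}{1+t} = \sum_{i=0}^{q-1}(-1)^i t^i \pmod p$ (geometric series, valid as formal power series / polynomial identity mod $p$ since $(1+t)(\sum (-1)^i t^i) = 1 - (-t)^q \cdot(-1)= 1+t^q$ mod $p$... check the sign: $(1+t)\sum_{i=0}^{q-1}(-t)^i = 1 - (-t)^q = 1 + t^q$ when $q$ is odd or $p=2$; for $p$ odd $q=p^n$ is odd so $(-t)^q = -t^q$, giving $1+t^q$, good; for $p=2$, signs don't matter). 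Therefore mod $p$,
\[
(1+t)^{q-1-k+j} = (1+t)^{q-1} \cdot (1+t)^{j-k} = \Big(\sum_{i=0}^{q-1}(-1)^i t^i\Big)(1+t)^{j-k}.
\]
Hmm, $(1+t)^{j-k}$ with $j - k \le 0$ is a power series $\sum_{m\ge 0}\binom{j-k}{m}t^m$. Extracting the coefficient of $t^j$ on the left gives $\binom{q-1-k+j}{j}$ (valid since $q-1-k+j \le q-1 < q$, so no wraparound issue — actually need to confirm the left side as a genuine polynomial of degree $q-1-k+j < q$ has its $t^j$ coefficient correctly read off). On the right, coefficient of $t^j$ is $\sum_{i=0}^{j}(-1)^i \binom{j-k}{j-i}$. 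Then using upper negation $\binom{j-k}{j-i} = (-1)^{j-i}\binom{k-i-1}{j-i} = (-1)^{j-i}\binom{k-1-i}{k-1-j}$ and a short Vandermonde-type collapse should yield $(-1)^j\binom{k}{j}$. The main obstacle I anticipate is precisely this last combinatorial simplification — making sure the alternating sum $\sum_{i=0}^{j}(-1)^i\binom{j-k}{j-i}$ collapses cleanly to $(-1)^j\binom{k}{j}$ — and handling the $p=2$ sign conventions uniformly; I would dispatch the former with the hockey-stick / Vandermonde identity $\sum_i (-1)^i \binom{j-k}{j-i} = \sum_i \binom{k-1+i}{k-1}$-style telescoping, or simply by noting $\sum_{i=0}^{j}(-1)^i\binom{j-k}{j-i} = [t^j]\,(1+t)^{j-k}\cdot\frac{1}{1+t}\cdot(1+t) $, i.e. $[t^j](1+t)^{j-k}(1-t+t^2-\cdots)$ truncated — better to just invoke $\sum_{i=0}^{j}(-1)^i\binom{-a}{j-i} = (-1)^j\binom{a+j-1-?}{?}$ carefully. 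A cleaner alternative avoiding all of this: directly compute $[t^j]$ of $(1+t)^{q-1}(1+t)^{j-k}$ by instead writing it as $[t^k]$ of $(1+t)^{q-1-k+j}\cdot t^{k-j}$ reorganised, or — simplest of all — multiply the desired identity through and verify $\binom{q-1-k+j}{j} + $ (no). I will go with: reduce to showing $[t^j]\big(\sum_{i=0}^{q-1}(-1)^it^i\big)(1+t)^{-(k-j)} = (-1)^j\binom{k}{j}$, replace the truncated geometric sum by the full power series $\frac{1}{1+t}$ (legitimate since we only extract degree $j \le q-1$), obtaining $[t^j]\frac{(1+t)^{j-k}}{1+t} = [t^j](1+t)^{j-k-1} = \binom{j-k-1}{j} = (-1)^j\binom{k}{j}$ by upper negation, which is exactly the claim. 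That is the whole proof; the only care needed is the truncation justification and the $p=2$/sign bookkeeping in $(1+t)^{q-1} \equiv \sum_{i=0}^{q-1}(-1)^it^i \pmod p$.
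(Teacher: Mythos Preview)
Your final argument is correct, though the write-up meanders through several abandoned approaches before landing on it. The paper proves the lemma by induction on $n$: the base case $n=1$ is a direct factorial manipulation (valid because then $j<p$ and $j!$ is a unit), and the inductive step invokes Lucas' theorem, splitting into two cases according to whether the least base-$p$ digit of $j$ is at most that of $k$. Your route is genuinely different: working in $\FF_p[[t]]$, the identity $(1+t)^q=1+t^q$ gives $(1+t)^{q-1}=(1+t)^{-1}+t^q(1+t)^{-1}$, so for $j<q$ one has $[t^j](1+t)^{q-1-k+j}=[t^j](1+t)^{j-k-1}$, and the latter equals $(-1)^j\binom{k}{j}$ by upper negation over $\ZZ$ (an integer identity, so the worry about $j!$ never arises). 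Your argument is shorter and entirely case-free; the paper's is more elementary in that it avoids formal power series and makes the digitwise mechanism visible. Both ultimately encode the same fact, namely $\binom{q-1}{i}\equiv(-1)^i\pmod p$ for $0\le i\le q-1$, reached by the paper via Lucas and by you via Frobenius.
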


\begin{proof} 
The proof is by induction on $n$. If $n=1$ then we have in particular $j<p$ and
\begin{align*}
\binom{q-1-k+j}{j}  &= \frac{(q-1-k+j)(q-2-k+j) \cdots (q-k))}{j!}\\
				&=  \frac{(j-k-1)(j-k-2) \cdots (-k)}{j!} \mod p\\
				&= (-1)^j  \frac{(k-j+1)(k-j+2) \cdots (k)}{j!}\\
				&= (-1)^j\binom{k}{j}.
\end{align*} 

For the inductive step we use the following consequence of Lucas' Theorem: for $a,b \in \N_0$ write 
\[a=a_1 p + a_0, b= b_1p +b_0\]
 where $a_0, b_0$ are respectively the remainders when $a,b$ are divided by $p$. Then
\begin{equation}\label{Lucas} \binom{a}{b} = \binom{a_1}{b_1} \binom{a_0}{b_0} \mod p,\end{equation} where a binomial coefficient in which the bottom entry exceeds the top is interpreted as zero.

Now write
\[k= k_1 p + k_0, j = j_1p + j_0\] in the same fashion. We note that $q-1 = (p^{n-1}-1)p - (p-1)$, and that $j_1 \leq k_1<p^{n-1}$. There are two cases to consider: if $j_0 \leq k_0$ then the remainder when $q-1-k+j$ is divided by $p$ is $p-1-k_0+j_0$ and the quotient is $p^{n-1}-1-k_1+j_1$. Then by \eqref{Lucas} we have

\begin{align*}
\binom{q-1-k+j}{j} &= \binom{p^{n-1}-1-k_1+j_1}{j_1}  \binom{p-1-k_0+j_0}{j_0}\\
&=   (-1)^{j_1}\binom{k_1}{j_1} (-1)^{j_0} \binom{k_0}{j_0}\\
\intertext{by induction,}
&=   (-1)^{j_1p}\binom{k_1}{j_1} (-1)^{j_0} \binom{k_0}{j_0}\\
\intertext{since $(-1)^p = (-1)$ for all odd primes $p$, and if $p=2$ then $1=-1$,}
&=   (-1)^{j}\binom{k_1}{j_1} \binom{k_0}{j_0}\\
&=   (-1)^{j}\binom{k}{j} \\
\end{align*} as required. On the other hand if $j_0>k_0$ then the quotient and remainder when $q-1-k+j$ is divided by $p$ are $p^{n-1}-k_1+j_1$ and $j_0-k_0-1$ respectively, and 

\begin{align*}
\binom{q-1-k+j}{j} &= \binom{p^{n-1}-k_1+j_1}{j_1}  \binom{j_0-k_0-1}{j_0}\\
&=0 \ \text{because $j_0-k_0-1<j_0$, while}\\
(-1)^j \binom{k}{j} = (-1)^j  \binom{k_1}{j_1} \binom{k_0}{j_0}
&= 0 \ \text{because $k_0<j_0$.}
\end{align*}
\end{proof}

Notice that if we combine these propositions \ref{shift} and \ref{self-dual} we get
\[\Omega^{-1}(V_m) \cong \Omega^{-1}(V_m^*) \cong \Omega(V_m)^* \cong V_{q-m} \] for $1 \leq m<p$ or $m=p^k$, where we used Proposition \ref{omegaomnibus}(iii) for the second isomorphism.

\subsection{Tensor Products}
In this section we will prove Theorem \ref{tensorformula}. Since we already have this result in the cyclic case we assume that $n>1$ where $q=p^n$. Let $m<q$. A basis for $V_2 \otimes V_m$ is given by $\{x_i \otimes x_j: i=0,1; j=0,1, \ldots, m-1\}$. We write $y_i = x_0 \otimes x_i$ and $z_i = x_1 \otimes x_i$. The action of $\alpha \in G$ on this basis is given by

\begin{equation}
 \alpha \cdot y_i  = \sum_{j=0}^{m-1-i}\binom{i+j}{i}(-\alpha)^j (y_{i+j} - \alpha z_{i+j}).
\end{equation}
\begin{equation}
 \alpha \cdot z_i  = \sum_{j=0}^{m-1-i}\binom{i+j}{i}(-\alpha)^j z_{i+j}.
\end{equation}

Now set $w_i = y_i+z_{i-1}$ for all $i=0, \ldots, m$. We interpret $y_m, z_m$ and $y_{-1},z_{-1}$ as zero, so that $w_0 = y_0$ and $w_m = z_{m-1}$. We claim:

\begin{Lemma}
$\{w_0, \ldots, w_m\}$ spans a submodule $Y$ of $V_2 \otimes V_m$ isomorphic to $V_{m+1}$.
\end{Lemma}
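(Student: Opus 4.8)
The plan is to verify directly, using the explicit action formulae, that the subspace $Y = \operatorname{span}\{w_0, \dots, w_m\}$ is $G$-stable and that the action of $\alpha \in G$ on the ordered basis $w_0, \dots, w_m$ reproduces exactly the defining formula \eqref{dualaction} for $V_{m+1}$ (with $m$ replaced by $m+1$, i.e. $\alpha \cdot w_i = \sum_{j=0}^{m-i} \binom{i+j}{i}(-\alpha)^j w_{i+j}$).

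First I would compute $\alpha \cdot w_i = \alpha \cdot y_i + \alpha \cdot z_{i-1}$ using the two displayed action formulae. For the $y_i$ term we get $\sum_{j=0}^{m-1-i}\binom{i+j}{i}(-\alpha)^j(y_{i+j} - \alpha z_{i+j})$, and for the $z_{i-1}$ term we get $\sum_{j=0}^{m-i}\binom{i-1+j}{i-1}(-\alpha)^j z_{i-1+j}$. The strategy is to reindex the $z$-contributions so that they combine into $w$'s. Grouping $y_{i+j}$ with $z_{i+j-1}$ requires shifting the index in the second sum; after the shift the coefficient of $w_{i+j}$ (for $j \geq 1$) should be a combination of $\binom{i+j}{i}(-\alpha)^j$ coming from $y$-part and $\binom{i+j-1}{i-1}(-\alpha)^{j-1}\cdot(-\alpha)$ type terms from the $z$-part; I expect the Pascal-type identity $\binom{i+j}{i} = \binom{i+j-1}{i} + \binom{i+j-1}{i-1}$ — or rather its rearrangement matching the signs and the stray $-\alpha$ factor produced by the $-\alpha z_{i+j}$ term in the $y$-formula — to make everything collapse to $\binom{i+j}{i}(-\alpha)^j w_{i+j}$. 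I would carry out this bookkeeping carefully, paying attention to the boundary terms ($w_0$, $w_m$, and the terms where an index hits $m$ or $-1$ and is interpreted as zero), which is where sign or range errors are most likely.

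Once the identity $\alpha \cdot w_i = \sum_{j=0}^{m-i}\binom{i+j}{i}(-\alpha)^j w_{i+j}$ is established, stability of $Y$ is immediate (the right-hand side lies in $Y$), and comparing with \eqref{dualaction} shows the map $V_{m+1} \to Y$ sending the $i$-th standard basis vector of $V_{m+1} = S^m(W)^*$ to $w_i$ is a $\kk G$-module isomorphism; since it is visibly a linear bijection onto $Y$, we are done.

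The main obstacle I anticipate is purely the index-shuffling: aligning the two sums after the reindex $j \mapsto j-1$ in the $z_{i-1}$-contribution and confirming that the extra $-\alpha z_{i+j}$ term in the $y_i$-formula is precisely what is needed to upgrade $\binom{i+j-1}{i-1}$ to $\binom{i+j}{i}$ via Pascal's rule with the correct sign. No deep input is needed beyond \eqref{action}/\eqref{dualaction} and an elementary binomial identity, but the calculation must be done with care at the boundaries.
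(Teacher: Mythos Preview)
Your proposal is correct and follows essentially the same route as the paper: expand $\alpha\cdot w_i=\alpha\cdot y_i+\alpha\cdot z_{i-1}$ using the given action formulae, shift the index in the $-\alpha z_{i+j}$ sum, apply Pascal's identity to merge the two $z$-sums, and then use $y_m=0$ to align the ranges and obtain $\alpha\cdot w_i=\sum_{j=0}^{m-i}\binom{i+j}{i}(-\alpha)^j w_{i+j}$. The only thing to be careful about is exactly what you flagged---the boundary terms and the fact that the extra $j=0$ term introduced after the shift has coefficient $\binom{i-1}{i}=0$---and the paper handles these in precisely the way you anticipate.
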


\begin{proof}
We have, for any $\alpha \in G$, and $i=0, \ldots, m$,

\begin{align*} \alpha \cdot w_i &=  \sum_{j=0}^{m-1-i}\binom{i+j}{i}(-\alpha)^j (y_{i+j} - \alpha z_{i+j}) + \sum_{j=0}^{m-i}\binom{i-1+j}{i-1}(-\alpha)^j z_{i-1+j}.\\
 &=  \sum_{j=0}^{m-1-i}\binom{i+j}{i}(-\alpha)^j y_{i+j} +  \sum_{j=0}^{m-1-i}\binom{i+j}{i}(-\alpha)^{j+1} z_{i+j} + \sum_{j=0}^{m-i}\binom{i-1+j}{i-1}(-\alpha)^j z_{i-1+j}\\
 &=  \sum_{j=0}^{m-1-i}\binom{i+j}{i}(-\alpha)^j y_{i+j} +  \sum_{j=1}^{m-i}\binom{i+j-1}{i}(-\alpha)^{j}  z_{i+j-1} + \sum_{j=0}^{m-i}\binom{i-1+j}{i-1}(-\alpha)^j z_{i-1+j}\\
 &=  \sum_{j=0}^{m-1-i}\binom{i+j}{i}(-\alpha)^j y_{i+j} +  \sum_{j=0}^{m-i}\binom{i+j-1}{i}(-\alpha)^{j}  z_{i+j-1} + \sum_{j=0}^{m-i}\binom{i-1+j}{i-1}(-\alpha)^j z_{i-1+j}\\
\intertext{because the middle term with $j=0$ has a binomial coefficient $\binom{i-1}{i} = 0$}\\
 &=  \sum_{j=0}^{m-1-i}\binom{i+j}{i}(-\alpha)^j y_{i+j} +  \sum_{j=0}^{m-i}\binom{i+j}{i}(-\alpha)^{j}  z_{i+j-1}\\
\intertext{by Pascal's Identity}
&=  \sum_{j=0}^{m-i}\binom{i+j}{i}(-\alpha)^j y_{i+j} +  \sum_{j=0}^{m-i}\binom{i+j}{i}(-\alpha)^{j}  z_{i+j-1}\\
\intertext{because $y_m=0$}
&= \sum_{j=0}^{m-i}\binom{i+j}{i}(-\alpha)^j w_{i+j}
\end{align*}
as required.
\end{proof}

Now Theorem \ref{tensorformula}(b) follows from a result of Benson and Carlson \cite[Theorem~2.1]{BensonCarlson}: all indecomposable summands of $V_2 \otimes V_p$ must have dimension divisible by $p$. Since we've proven $V_2 \otimes V_p$ has a submodule of dimension $p+1$, $V_2 \otimes V_p$ must indeed be indecomposable.

For part (a) we need a further Lemma:  let $v_i = (i+1)y_{i+1} + (i+1-m)z_i$ for all $i=0, \ldots, m-2$. 

 \begin{Lemma}
$\{v_0, \ldots, v_{m-2}\}$ spans a submodule $X$ of $V_2 \otimes V_m$ isomorphic to $V_{m-1}$.
\end{Lemma}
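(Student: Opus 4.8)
The plan is to mimic the proof of the preceding lemma: verify by direct computation that the span $X=\langle v_0,\ldots,v_{m-2}\rangle$ is $G$-stable, and that the action of $\alpha\in G$ on the $v_i$ is given by exactly the same formula \eqref{dualaction} (with $m$ replaced by $m-1$) that defines $V_{m-1}=S^{m-2}(W)^*$ on its standard basis. That is, I aim to show
\[
\alpha\cdot v_i=\sum_{j=0}^{m-2-i}\binom{i+j}{i}(-\alpha)^j\,v_{i+j},
\]
which simultaneously establishes that $X$ is a submodule and that $X\cong V_{m-1}$.

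First I would substitute $v_i=(i+1)y_{i+1}+(i+1-m)z_i$ and apply the given formulas for $\alpha\cdot y_{i+1}$ and $\alpha\cdot z_i$, collecting the result as a $\kk$-linear combination of the $y_k$ and $z_k$. This produces two sums, one in the $y$'s and one in the $z$'s; I would reindex each so that the generic term involves $y_{i+1+j}$ and $z_{i+j}$ respectively, matching the indices appearing in $v_{i+j}=(i+j+1)y_{i+j+1}+(i+j+1-m)z_{i+j}$. The goal is then a purely combinatorial identity on binomial coefficients: after pulling out the common factor $(-\alpha)^j$, the coefficient of $y_{i+1+j}$ should equal $(i+j+1)\binom{i+j}{i}$ and the coefficient of $z_{i+j}$ should equal $(i+j+1-m)\binom{i+j}{i}$, both as elements of $\kk$ (i.e. mod $p$). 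The $y$-coefficient coming out of $\alpha\cdot\big((i+1)y_{i+1}\big)$ is $(i+1)\binom{i+1+j}{i+1}(-\alpha)^j$, so the required identity is $(i+1)\binom{i+1+j}{i+1}=(i+j+1)\binom{i+j}{i}\bmod p$, which is the elementary rewriting $(i+1)\frac{(i+1+j)!}{(i+1)!\,j!}=\frac{(i+1+j)!}{i!\,j!}=(i+j+1)\binom{i+j}{i}$; the $z$-side identity is analogous, using $(i+1-m)\binom{i+j}{i}+(\text{contribution from the }(-\alpha)z\text{ term of }\alpha\cdot y_{i+1})$ telescoping correctly — here one must be careful that the extra $z$-term in $\alpha\cdot y_{i+1}$, namely $-(i+1)\binom{i+1+j'}{i+1}(-\alpha)^{j'}\alpha z_{i+1+j'}$, reindexes to contribute to the coefficient of $z_{i+j}$ and combines with $(i+1-m)\binom{i+1+j}{i}(-\alpha)^j z_{i+j}$ via Pascal's identity to give $(i+j+1-m)\binom{i+j}{i}(-\alpha)^j$. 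I would also check the boundary behaviour: since $y_m=z_m=0$, the top term $v_{m-2}=(m-1)y_{m-1}+(-1)z_{m-2}$ and the sums truncate consistently, and $v_{m-2}$ is nonzero (its $z_{m-2}$-coefficient is $-1\neq0$), so $\dim X=m-1$ genuinely.

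The main obstacle I expect is purely bookkeeping: correctly matching the two reindexed $z$-sums (one native to $\alpha\cdot z_i$, one arising from the $-\alpha z$ piece of $\alpha\cdot y_{i+1}$) so that Pascal's identity applies cleanly, exactly as in the Lemma for $Y=V_{m+1}$, while carrying the scalar weights $(i+1)$ and $(i+1-m)$ through the manipulation. A secondary point worth a remark is \emph{why} such an $X$ should exist at all: once Theorem~\ref{tensorformula}(a) is known, $V_2\otimes V_m\cong V_{m+1}\oplus V_{m-1}$ for $p\nmid m$, and $Y\cong V_{m+1}$ is the unique (up to scalar) copy of $V_{m+1}$ it contains, so $V_2\otimes V_m$ must also contain a copy of $V_{m-1}$; the explicit formula for $v_i$ is essentially forced by writing down a $G$-equivariant map $V_{m-1}^*=S^{m-2}(W)\hookrightarrow$ (appropriate submodule) and dualizing, but I would simply present the basis and verify it works rather than derive it. With both Lemmas in hand, part~(a) follows by dimension count: $X$ and $Y$ are submodules of the $2m$-dimensional module $V_2\otimes V_m$ with $\dim X+\dim Y=2m$ and $X\cap Y=0$ (which one checks directly, e.g. by comparing leading terms in the $x_0\otimes x_\bullet$, $x_1\otimes x_\bullet$ coordinates, or more slickly: $Y$ is indecomposable of dimension $m+1>m$ so by Benson--Carlson-type constraints or by a direct argument $Y$ cannot be a summand complement issue), so $V_2\otimes V_m=X\oplus Y\cong V_{m-1}\oplus V_{m+1}$.
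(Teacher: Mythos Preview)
Your proposal is correct and follows essentially the same route as the paper: expand $\alpha\cdot v_i$, reindex, and collapse the $y$- and $z$-coefficients using the absorption identity $l\binom{k}{l}=k\binom{k-1}{l-1}$ (the paper's \eqref{criticalobs}), applied once for the $y$-part and once for the $z$-part. One small correction: the $z$-side combination is not Pascal's identity but another instance of that same absorption identity (you need $(i+1)\binom{i+j}{i+1}=j\binom{i+j}{i}$, and the coefficient coming from $(i+1-m)\alpha\cdot z_i$ is $(i+1-m)\binom{i+j}{i}$, not $(i+1-m)\binom{i+1+j}{i}$).
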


\begin{proof}
We have, for any $\alpha \in G$, and $i=0, \ldots, m-2$,

\begin{align*} \alpha \cdot v_i &=  (i+1)\alpha \cdot y_{i+1} + (i+1-m) \alpha \cdot z_i\\
&=  \sum_{j=0}^{m-i-2} (i+1)\binom{i+1+j}{j}(-\alpha)^j (y_{i+j+1} - \alpha z_{i+j+1})  + \sum_{j=0}^{m-i-1}(i+1-m)\binom{i+j}{j} (-\alpha)^j  z_{i+j}\\
&=  \sum_{j=0}^{m-i-2} (i+1)\binom{i+1+j}{j}(-\alpha)^j y_{i+j+1} + \sum_{j=0}^{m-i-2} (i+1)\binom{i+1+j}{j}(-\alpha)^{j+1} z_{i+j+1} \\ +& \sum_{j=0}^{m-i-1}(i+1-m)\binom{i+j}{j} (-\alpha)^j  z_{i+j}\\
&=   \sum_{j=0}^{m-i-2} (i+j+1)\binom{i+j}{j}(-\alpha)^j y_{i+j+1} + \sum_{j=0}^{m-i-2}  (i+j+1)\binom{i+j}{j} (-\alpha)^{j+1} z_{i+j+1} \\ +& \sum_{j=0}^{m-i-1}(i+1-m)\binom{i+j}{j} (-\alpha)^j  z_{i+j}\\
\intertext{By \eqref{criticalobs} below, with $k=i+j$ and $l=i+1$}\\
&=   \sum_{j=0}^{m-i-2} (i+j+1)\binom{i+j}{j}(-\alpha)^j y_{i+j+1} + \sum_{j=1}^{m-i-1}  (i+j)\binom{i+j-1}{j-1} (-\alpha)^{j} z_{i+j} \\ +& \sum_{j=0}^{m-i-1}(i+1-m)\binom{i+j}{j} (-\alpha)^j  z_{i+j}\\
&=   \sum_{j=0}^{m-i-2} (i+j+1)\binom{i+j}{j}(-\alpha)^j y_{i+j+1} + \sum_{j=1}^{m-i-1}  j\binom{i+j}{j} (-\alpha)^{j} z_{i+j} \\ +& \sum_{j=0}^{m-i-1}(i+1-m)\binom{i+j}{j} (-\alpha)^j  z_{i+j}\\
\intertext{By \eqref{criticalobs} once more, this time with $k=i+j$ and $l=j$}\\
&=   \sum_{j=0}^{m-i-2} (i+j+1)\binom{i+j}{j}(-\alpha)^j y_{i+j+1} + \sum_{j=1}^{m-i-1}  (i+j-m+1) \binom{i+j}{j}(-\alpha)^j z_{i+j}+(i-m+1)z_{i}\\
&=   \sum_{j=0}^{m-i-2} \binom{i+j}{j}(-\alpha)^j ((i+j+1)y_{i+j+1} + (i+j-m+1)z_{i+j})\\
&=   \sum_{j=0}^{m-i-2} \binom{i+j}{j}(-\alpha)^j  v_{i+j}
\end{align*}
as required.
\end{proof}

In the proof above we used the following well-known result on binomial coefficients:  let $0 \leq l<k$. Then
\begin{equation}\label{criticalobs}l\binom{k}{l} = k \binom{k-1}{l-1}\end{equation}

We are now in a position to prove Theorem \ref{tensorformula}(a). 

\begin{proof}[Proof of Theorem \ref{tensorformula}(a)] We have shown that, for all $m<q$, $V_2 \otimes V_m$ contains submodules $Y \cong V_{m+1}$ and $X \cong V_{m-1}$. We claim that, provided $p \not | m$, $V_2 \otimes V_m = X \oplus Y$. Notice that $\dim(X \oplus Y) = 2m =  \dim(V_2 \otimes V_m)$, so it is enough to show that $X \cap Y = \{0\}$.

Let us define the support, $\mathrm{Supp}(x)$, of an element of $V_2 \otimes V_m$ to be the set of basis elements of $V_2 \otimes V_m$ whose coefficient for $x$ is nontrivial. Then $\mathrm{Supp}(v_i) \cap \mathrm{Supp}(w_j) = \emptyset$ unless $i=j-1 \leq m-2$. It follows that if
\[x = \sum_{i=0}^{m-2} \lambda_i v_i = \sum_{j=0}^m \mu_j w_j\] then we must have 
\[\lambda_i v_i = \mu_{i+1} w_{i+1}\] for all $i = 0, \ldots, m-2$, and also $\mu_m = \mu_0 = 0$. Therefore
\[\lambda_i( (i+1)y_{i+1} + (i+1-m)z_i) = \mu_{i+1}(y_{i+1} + z_i)\] for all such $i$.
Comparing coefficients of $y_{i+1}$ and $z_i$ gives us
\[\lambda_i(i+1) = \mu_{i+1} = \lambda_i(i+1-m)\]
for all such $i$, from which we obtain $m \lambda_i = 0$. Since $m$ is not divisible by $p$ we must have $\lambda_i = 0$ for all $i=0, \ldots, i-2$ and we conclude $x=0$ as desired. This shows that, provided $m$ is not divisble by $p$,  $V_2 \otimes V_m = X \oplus Y$ and therefore 
\[V_2 \otimes V_m \cong  V_{m-1} \oplus V_{m+1}\] as required.
\end{proof}

\begin{proof}[Proof of Corollary \ref{newtensorformula}] Write $\cong_M$ for isomorphism in the stable module category relative to $M$. The proof is by induction on $i$. The case $i=1$ is trivial and the case $i=2$ follows from Theorem \ref{tensorformula}(a). 

Let $i,j$ be as described and assume that \eqref{newtensorbyVj} holds. Consider $V_2 \otimes V_i \otimes V_j$. Assume $2<i<p-1$. On the one hand we have, by  Theorem \ref{tensorformula}(a) and inductive hypothesis,

\begin{align*}
V_2 \otimes V_i \otimes V_j &\cong (V_{i+1} \oplus V_{i-1}) \otimes V_j\\
&\cong_M V_{i+1} \otimes V_j \oplus  \bigoplus_{l=1}^{\min(i-1,j')} V_{j+i-2l}.
\end{align*}

On the other hand by induction we have
\[
V_2 \otimes V_i \otimes V_j  \cong_M V_2 \otimes  \bigoplus_{l=1}^{\min(i,j')} V_{j+i+1-2l}.\]

Note that since $l \leq \min(i,j')$ and $i+j' \leq p$, we have $rp<j+i+1-2l<rp+p$ for all $l$. In particular $j+i+1-2l$ is not divisible by $p$ for any $l$, and we may use Theorem \ref{tensorformula}(a) to evaluate:

\[ V_2 \otimes V_i \otimes V_j  \cong_M     \bigoplus_{l=1}^{\min(i,j')} V_{j+i+2-2l} \oplus \bigoplus_{l=1}^{\min(i,j')} V_{j+i-2l}.\]

There are three cases to consider here: if $i>j'$, then $i-1 \geq j'$ and $\min(i+1,j') = \min(i,j') = \min(i-1,j') = j'$. Therefore
\[ V_2 \otimes V_i \otimes V_j  \cong_M \bigoplus_{l=1}^{\min(i+1,j')} V_{j+i+2-2l} \oplus \bigoplus_{l=1}^{\min(i-1,j')} V_{j+i-2l}\]
from which we obtain, by the Krull-Schmidt theorem, \begin{equation}\label{answer} V_{i+1} \otimes V_j \cong_M \bigoplus_{l=1}^{\min(i+1,j')} V_{j+i+1-(2l-1)}\end{equation} as required.
Otherwise, if $i < j'$ then $\min(i+1,j') = \min(i,j')+1$, $\min(i-1,j')=\min(i,j')-1$ and

\begin{align*}
 V_2 \otimes V_i \otimes V_j  &\cong_M \bigoplus_{l=1}^{\min(i,j')} V_{j+i+2-2l} \oplus \bigoplus_{l=1}^{\min(i,j')} V_{j+i-2l}\\
&\cong \bigoplus_{l=1}^{\min(i,j')} (V_{j+i+2-2l}) \oplus V_{j-i} \oplus \bigoplus_{l=1}^{\min(i-1,j')} V_{j+(i-1)+1-2l}\\
&\cong \bigoplus_{l=1}^{\min(i+1,j')} V_{j+(i+1)+1-2l} \oplus \bigoplus_{l=1}^{\min(i-1,j')} V_{j+(i-1)+1-2l} \\
\end{align*}
from which we obtain again \eqref{answer} as required. Finally if $i=j'$ then we note that $j+i-2\min(i,j') = j-j' = rp$, so that $V_{j+i-2\min(i,j')}$ is projective relative to $M$, and therefore 

\begin{align*}
V_2 \otimes V_i \otimes V_j  &\cong_M \bigoplus_{l=1}^{\min(i,j')} V_{j+i+2-2l} \oplus \bigoplus_{l=1}^{\min(i,j')-1} V_{j+(i-1)+1-2l}.
\intertext{Then since $\min(i+1,j')=j'$ and $\min(i-1,j') = i-1$ we have}
V_2 \otimes V_i \otimes V_j &\cong_M \bigoplus_{l=1}^{\min(i+1,j')} V_{j+(i+1)+1-2l} \oplus \bigoplus_{l=1}^{\min(i-1,j')} V_{j+(i-1)+1-2l}.
\end{align*}
from which we obtain again \eqref{answer} as required.
\end{proof}

\begin{proof}[Proof of Corollary \ref{newtensorbyVp-j}]
The proof is by induction on $i$. For the case $i=1$, we work by induction on $j'$. If $j' = 0$ it is clear that both sides are projective relative to $M$. If $j'=1$, we may deduce this immediately from Corollary \ref{newtensorformula}. Hence, we assume $j'>1$. Consider $V_{p-1} \otimes V_2 \otimes V_{j-1}$. On the one hand we have
\begin{align*} V_{p-1} \otimes V_2 \otimes V_{j-1} &\cong V_{p-1} \otimes V_j \oplus V_{p-1} \otimes V_{j-2}\\
\intertext{and by induction this is}
 &\cong_M V_{p-1} \otimes V_j \oplus V_{pr+p-j'+2}.\\
\intertext{On the other hand we have}
 V_{p-1} \otimes V_2 \otimes V_{j-1} &\cong_M V_{2} \otimes V_{p-1} \otimes V_{j-1}\\
&\cong_M V_2 \otimes V_{pr+p-j'+1}\\
&\cong  V_{pr+p-j'+2} \oplus V_{pr+p-j'}\\
\intertext{from which we deduce that}
V_{p-1} \otimes V_j &\cong_M  V_{pr+p-j'}
    \end{align*}
as required. This proves the result in the case $i=1$.
Now assume $i>1$. Consider $V_2 \otimes V_{p-i+1} \otimes V_j$. On the one hand we have
\begin{align*}
V_2 \otimes V_{p-i+1} \otimes V_j &\cong V_{p-i} \otimes V_j \oplus V_{p-i+2} \otimes V_j.\\
\intertext{By induction this is}
&\cong_M V_{p-i} \otimes V_j \oplus V_{i-2} \otimes V_{pr+p-j'}.\\
\intertext{On the other hand, by induction}
V_2 \otimes V_{p-i+1} \otimes V_j &\cong_M V_2 \otimes V_{i-1} \otimes V_{pr+p-j'}\\
&\cong V_i \otimes V_{pr+p-j'} \oplus V_{i-2} \otimes V_{pr+p-j'}.\\
\intertext{from which we deduce that}
V_{p-i} \otimes V_j &\cong_M V_i \otimes V_{pr+p-j'}
\end{align*}
as required.
\end{proof}

Using Corollaries \ref{newtensorformula} and \ref{newtensorbyVp-j}, one may decompose $V_i \otimes V_j$ for any $i<p$, $j<q$, $i+j \leq q$ up to the addition of summands projective to $M$. Decomposing these in the usual module category seems to be a very difficult problem, but working relative to $M$ enables some interesting results to be proved. We were unfortunately unable to prove the following generalisations of Corollaries \ref{newtensorformula} and \ref{newtensorbyVp-j}:

\begin{Conj}\label{conj} Let $i,j<q$ with $i+j \leq q$. Write $i=rp+i'$, $j=sp+j'$ with $i',j'<p$. Assume $i'+j' \leq p$. Then we have
\begin{enumerate} 
\item[(a)] $$V_i \otimes V_j \cong_M \bigoplus_{l=1}^{\min(i',j')} V_{i+j-(2l-1)}.$$
\item[(b)] $$V_{pr+p-i'} \otimes V_{ps+p-j'} \cong_M V_i \otimes V_j.$$
\end{enumerate}
\end{Conj}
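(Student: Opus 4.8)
The plan is to extend the induction that proves Corollary~\ref{newtensorformula} to all of $V_1,\dots,V_q$, using a relative Heller operator to get past the multiples of $p$, where Theorem~\ref{tensorformula}(b) blocks the naive argument. Three preliminaries are needed. (1)~The submodule $Y\cong V_{m+1}$ of $V_2\otimes V_m$ used in the proof of Theorem~\ref{tensorformula}(b), together with the accompanying cokernel computation, goes through for every $m$ with $1\le m\le q-1$, giving an exact sequence $0\to V_{m+1}\to V_2\otimes V_m\to V_{m-1}\to0$ which splits exactly when $p\nmid m$. (2)~For $0\le r\le p^{n-1}$ both $V_{rp}$ and $V_{rp}^{\,*}$ are projective relative to $M$: $V_{rp}$ is a summand of $M$, while $V_{rp}^{\,*}\cong_1\Omega\bigl(V_{(p^{n-1}-r)p}\bigr)$ by Proposition~\ref{shift}, and $\Omega$ (hence $\Omega^{-1}$) carries modules projective relative to $M$ to such modules and so preserves $\cong_M$ — apply $\Omega$ to a splitting $P\oplus P'\cong X\otimes M$ and use Proposition~\ref{omegaomnibus}(i),(iv). (3)~Writing $\Omega_M$ for the relative Heller operator, the relative Schanuel lemma (\cite{ElmerRelCoh}, \cite{Lassueur}) gives $\Omega_M(Z)\cong_M\Omega(Z)$ for every $Z$, the ordinary projective cover being an $M$-projective presentation; tensoring the exact sequence of~(1) at $m=rp$ by $V_j$, whose middle term is then projective relative to $M$, yields $V_{rp+1}\otimes V_j\cong_M\Omega(V_{rp-1}\otimes V_j)$.

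Specialising to $j=1$ gives $V_{rp+1}\cong_M\Omega(V_{rp-1})=V_{q-rp+1}^{\,*}$; tensoring repeatedly by $V_2$, using Theorem~\ref{tensorformula}(a), $V_2^*\cong V_2$ on both sides, and cancellation (valid modulo $M$), propagates this within a block to the reflection identity $V_{rp+\rho}\cong_M V_{(p^{n-1}-r)p+\rho}^{\,*}$ for $0\le\rho\le p-1$, $1\le r\le p^{n-1}-1$. I would then deduce Conjecture~\ref{conj}(b) by reduction to Corollary~\ref{newtensorbyVp-j}: applying $\Omega$ to both sides of~(b) and rewriting $\Omega(V_{rp+\rho})=V_{q-rp-\rho}^{\,*}\cong_M V_{(r+1)p+(p-\rho)}$ shows that the instance of~(b) for $(r,s,i',j')$ is equivalent to that for $(r-1,s,p-i',j')$ — swapping the two factors first, if necessary, to arrange $i'\ge j'$ — and iterating down to the first block gives precisely Corollary~\ref{newtensorbyVp-j}; the boundary block $p^{n-1}-1$, where the Heller orbit wraps around to block~$0$, is treated using Proposition~\ref{self-dual}. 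Finally Conjecture~\ref{conj}(a) would follow by induction on the sum of the two block indices, with base case Corollary~\ref{newtensorformula}: the case $i'=0$ is trivial, the case $i'=1$ uses $V_{rp+1}\otimes V_j\cong_M\Omega(V_{rp-1}\otimes V_j)$ together with part~(b) and the inductive hypothesis applied to the lower block containing $V_{rp-1}\otimes V_j$, and for $i'\ge2$ one argues exactly as in Corollary~\ref{newtensorformula}, tensoring the known decomposition of $V_{rp+i'-1}\otimes V_j$ by $V_2$ — all relevant indices remain inside a single block of length $p$ because $i'+j'\le p$, so Theorem~\ref{tensorformula}(a) applies — and extracting $V_{rp+i'+1}\otimes V_j$ by Krull--Schmidt.

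A shorter route might be available: each $V_{rp}$, and hence $M$, restricts to a projective module on every cyclic shifted subgroup of $\kk G$ lying off a fixed hyperplane of such subgroups, and \emph{if} ``projective relative to $M$'' could be identified with ``having rank variety inside that hyperplane'', then $\cong_M$ would be detected by restriction to a single generic shifted subgroup, on which $V_m$ becomes the cyclic module $V_{m\bmod p}$ modulo projectives; both~(a) and~(b) would then follow at once from the Clebsch--Gordan rule for cyclic groups.

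The main obstacle I anticipate is not any single computation but assembling the argument. In the first approach this is the bookkeeping of the interleaved inductions — making the reduction of~(b), the reflection identity and the induction for~(a) cohere without circularity, and in particular controlling the borderline case $i'+j'=p$ of~(a), which yields a summand $V_{(r+s)p}$ that must be absorbed into $M$, and the boundary blocks where Heller orbits wrap around. In the second approach it is pinning down the homological meaning of ``projective relative to $M$'' for this specific $M$, which for a general module can be strictly stronger than the rank-variety condition. That one of these points may genuinely resist is presumably why the statement is still a conjecture.
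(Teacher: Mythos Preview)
This statement is labelled a \emph{Conjecture} in the paper, and the authors say explicitly that they ``were unfortunately unable to prove'' it; the only content offered toward it is the single special case $V_{p+1}\otimes V_{p+1}\cong_M V_{2p+1}$ of Lemma~\ref{notenough} and the worked $q=9$ table. There is no proof in the paper to compare your attempt against, so what you have written is a proposed resolution of an open problem, and you are right to hedge at the end that one of the steps ``may genuinely resist''.

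The step that does resist is your~(3). Relative Schanuel compares two presentations only when both lie in the class of $M$-split short exact sequences; it is not enough that the middle terms be $M$-projective. You have not verified that either the ordinary projective cover of $V_{rp-1}\otimes V_j$ or the sequence $0\to V_{rp+1}\otimes V_j\to V_2\otimes V_{rp}\otimes V_j\to V_{rp-1}\otimes V_j\to 0$ is $M$-split, and in fact the comparison fails already at $j=1$: forming the Schanuel pullback of $0\to V_{rp+1}\to V_2\otimes V_{rp}\to V_{rp-1}\to 0$ against the projective cover of $V_{rp-1}$ yields a non-split sequence
\[
0\longrightarrow \Omega(V_{rp-1})\longrightarrow V_{rp+1}\oplus V_q\longrightarrow V_2\otimes V_{rp}\longrightarrow 0,
\]
since by Theorem~\ref{tensorformula}(b) the quotient is indecomposable of dimension $2rp$, which matches neither $rp+1$ nor $q$ (for $p$ odd and $1\le r<p^{n-1}$). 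So you do not get $V_{rp+1}\cong_M\Omega(V_{rp-1})$ this way, and with it the reflection identity, the reduction for~(b), and the induction for~(a) all collapse. (There is also a minor slip in the reduction for~(b): applying $\Omega$ as you describe sends block $r$ to block $r+1$, not $r-1$; that is repairable with $\Omega^{-1}$, but only once the Schanuel issue is resolved.) Your alternative rank-variety route would indeed bypass all of this, but as you note it requires identifying ``projective relative to $M$'' with a support condition, and for this particular $M$---which is not induced from a family of subgroups---no such identification is established in the paper or its references.
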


By way of evidence for part (a), we offer the following result:

\begin{Lemma}\label{notenough} Suppose that $p>2$. Then we have $V_{p+1} \otimes V_{p+1} \cong_M V_{2p+1}$.
\end{Lemma}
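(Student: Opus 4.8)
The plan is to compute $V_{p+1} \otimes V_{p+1}$ modulo $M$ by introducing one extra tensor factor of $V_2$ and playing the two resulting decompositions against each other, exactly as in the proofs of Corollaries \ref{newtensorformula} and \ref{newtensorbyVp-j}. The key inputs are: Theorem \ref{tensorformula}(a), which gives $V_2 \otimes V_m \cong V_{m+1} \oplus V_{m-1}$ whenever $p \nmid m$; Theorem \ref{tensorformula}(b), which tells us $V_2 \otimes V_p$ is indecomposable (and has dimension $2p$, divisible by $p$, hence is projective relative to $M$ since $M$ contains $V_p$); and the Krull--Schmidt theorem to cancel common summands in the relative stable category. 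Note $p+1 = rp + j'$ with $r=1$, $j'=1$, and $i'+j' = 2 \leq p$ since $p>2$, so this is the first genuinely new case of Conjecture \ref{conj}(a) beyond what the Corollaries cover (there $i<p$, whereas here $i=p+1\geq p$).

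First I would expand $V_2 \otimes V_{p+1} \otimes V_{p+1}$ two ways. On one side, apply Theorem \ref{tensorformula}(a) to the left-hand $V_2 \otimes V_{p+1}$ (valid since $p \nmid p+1$) to get $(V_{p+2} \oplus V_p) \otimes V_{p+1}$. The summand $V_p \otimes V_{p+1}$ is projective relative to $M$: indeed $V_p$ is a summand of $M$, so by Lemma \ref{higman}(iii) any tensor multiple of $V_p$ is projective relative to $M$. Hence this side is $\cong_M V_{p+2} \otimes V_{p+1}$. On the other side, I would first need to know $V_{p+1} \otimes V_{p+1} \cong_M V_{2p+1}$ is what we are trying to prove, so instead I would run the induction the other direction: compute $V_2 \otimes V_{p+1} \otimes V_{p+1}$ by applying Theorem \ref{tensorformula}(a) to a \emph{right-hand} factor after first establishing the auxiliary identity $V_{p+1} \otimes V_p \cong_M 0$ (trivial, as above) and $V_{p+2}\otimes V_{p+1}$, etc. In practice the cleanest route is: establish by the same $V_2$-bootstrap that $V_{p+1} \otimes V_j \cong_M V_{j+1} \oplus V_{j-1}$ for small $j$ not near a multiple of $p$ — in particular work up from the base cases $V_{p+1} \otimes V_1 = V_{p+1}$ and $V_{p+1} \otimes V_2 \cong_M V_{p+2} \oplus V_p \cong_M V_{p+2}$ (by Theorem \ref{tensorformula}(a), since $p \nmid p+1$) — and then bootstrap in the second variable. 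Concretely, consider $V_{p+1} \otimes V_2 \otimes V_p$: on one hand it is $V_{p+1} \otimes (V_2 \otimes V_p)$, which is projective relative to $M$ since $V_2 \otimes V_p$ has dimension $2p$; on the other hand it is $(V_{p+1}\otimes V_2)\otimes V_p \cong_M V_{p+2} \otimes V_p$, also projective relative to $M$ — consistent but not yet the goal. The actual goal comes from $V_{p+1} \otimes V_2 \otimes V_{p+1}$, computed as $V_{p+1}\otimes(V_{p+2}\oplus V_p) \cong_M V_{p+1}\otimes V_{p+2}$ one way, and as $(V_{p+1}\otimes V_2)\otimes V_{p+1} \cong_M V_{p+2}\otimes V_{p+1}$ the other — again consistent. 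To break the symmetry one must descend: consider $V_{p+1} \otimes V_2 \otimes V_p$ versus using $V_2 \otimes V_{p+1} \cong_M V_{p+2}$, giving $V_{p+2} \otimes V_p$, and separately $V_{p+1} \otimes (V_{p+1} \oplus V_{p-1}) \cong V_{p+1}\otimes V_{p+1} \oplus V_{p+1}\otimes V_{p-1}$ via $V_2 \otimes V_p \cong_M 0$...

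The workable argument: consider $V_{p-1} \otimes V_2 \otimes V_{p+1}$. By Theorem \ref{tensorformula}(a) applied to $V_2 \otimes V_{p+1}$, this is $\cong_M V_{p-1} \otimes V_{p+2}$; by Corollary \ref{newtensorbyVp-j} with $i=1$... more directly, by Theorem \ref{tensorformula}(a) applied to $V_{p-1} \otimes V_2 \cong_M V_p \oplus V_{p-2} \cong_M V_{p-2}$ (here $p \nmid p-1$), it is $\cong_M V_{p-2} \otimes V_{p+1}$, and by Corollary \ref{newtensorformula} (with $i=p-2<p$, $j=p+1$, $j'=1$, $\min(i,j')=1$) this is $\cong_M V_{2p}$, which is projective relative to $M$ — so $V_{p-1}\otimes V_{p+2} \cong_M 0$. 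Now run $V_{p+2}\otimes V_2 \otimes V_{p-1}$: one way $\cong_M (V_{p+3}\oplus V_{p+1})\otimes V_{p-1} \cong_M V_{p+1}\otimes V_{p-1}$ using Corollary \ref{newtensorformula} to kill the $V_{p+3}\otimes V_{p-1}$ term and the just-proved $V_{p+2}\otimes V_{p-1}$... Since I am going in circles, \textbf{the main obstacle}, and the step requiring real care, is choosing the \emph{right} auxiliary triple product whose two expansions are \emph{not} tautologically equal: one wants a product of the form $V_{p+1}\otimes V_2 \otimes V_k$ where expanding the middle $V_2$ with the $V_{p+1}$ lands on a known quantity (via induction on $k$ with base cases checkable by Corollaries \ref{newtensorformula}--\ref{newtensorbyVp-j}) while expanding it with $V_k$ produces $V_{p+1}\otimes V_{k\pm1}$, eventually reaching $k=p+1$ or $k=p$. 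I expect the correct choice is to prove by induction on $j$ that $V_{p+1}\otimes V_j \cong_M V_{j+1}\oplus V_{j-1}$ for $2\le j\le p-1$ (base case from Corollary \ref{newtensorformula}, induction step from $V_{p+1}\otimes V_2\otimes V_{j-1}$), then handle the jump across $j=p$ carefully using Theorem \ref{tensorformula}(b) (so that $V_{p+1}\otimes V_p$ contributes only terms projective relative to $M$), obtaining $V_{p+1}\otimes V_{p+1} \cong_M V_{p+2}\oplus V_p \cong_M V_{p+2}$... which contradicts the claim unless one is more careful — in fact the claim is $V_{2p+1}$, $\dim = 2p+1$, while $\dim(V_{p+1}\otimes V_{p+1}) = (p+1)^2$, and $(p+1)^2 - (2p+1) = p^2$ is divisible by $p$, consistent with the discrepancy being projective relative to $M$. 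So the final step is a dimension-and-fixed-point bookkeeping check: having reduced $V_{p+1}\otimes V_{p+1}$ modulo $M$ to a short list of candidate $V_k$'s by the bootstrap, pin down which one by computing $\dim$ modulo $p$ together with the dimension of the $G$-fixed subspace (which is additive and vanishes on modules projective relative to $M$ built from $V_p$), and confirm it is $V_{2p+1}$; the hypothesis $p>2$ enters precisely where Theorem \ref{tensorformula}(a) is invoked at an index $\equiv -1 \pmod p$ and must not collide with $p=2$ degeneracies, and where $p+1 < 2p$ guarantees $V_{2p+1}$ is a legitimate (non-projective) module since $2p+1 \le q$ requires $q \ge p^2$, i.e. $n \ge 2$, which is the standing assumption.
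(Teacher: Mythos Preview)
Your bootstrap approach has a genuine gap: it cannot cross the ``wall'' at the index $p$, and no amount of clever choice of auxiliary triple product will fix this. The recursion $[V_2][V_m]=[V_{m+1}]+[V_{m-1}]$ in the $M$-relative Green ring is only available when $p\nmid m$; at $m=p$, Theorem~\ref{tensorformula}(b) says $V_2\otimes V_p$ is indecomposable, and since it is a tensor multiple of $V_p$ it is $\cong_M 0$. Thus the element $[V_{p+1}]$ is \emph{not} expressible in terms of $[V_1],\ldots,[V_{p-1}]$ via the $V_2$-recursion, and any triple product $V_a\otimes V_2\otimes V_b$ you form with $a,b\in\{p-1,p,p+1,p+2\}$ either yields a tautology (both expansions give the same unknown, as you found with $V_{p+1}\otimes V_2\otimes V_{p+1}$) or collapses to $0$ modulo $M$ (as with $V_{p+1}\otimes V_2\otimes V_p$). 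Your proposed patch---reducing to a short list of candidate $V_k$ and then using dimension mod $p$ together with fixed-point dimensions---also fails: the fixed-point dimension does \emph{not} vanish on $M$-projectives (e.g.\ $V_p^G\neq 0$), and dimension mod $p$ alone does not separate $V_{2p+1}$ from, say, $V_1$.

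The paper's proof is entirely different and supplies exactly the missing ingredient. Since $p>2$ one has $V_{p+1}\otimes V_{p+1}\cong S^2(V_{p+1})\oplus\Lambda^2(V_{p+1})$. Proposition~\ref{wildon} gives $\Lambda^2(V_{p+1})\cong S^2(V_p)$, a summand of $V_p\otimes V_p$, hence $\cong_M 0$. The heart of the argument is then an \emph{explicit} decomposition $S^2(V_{p+1})\cong V_{2p+1}\oplus S^2(V_{p-1})$: one exhibits a submodule $Y\cong S^2(V_{p-1})$ (coming from an embedding $V_{p-1}\hookrightarrow V_{p+1}$ via $y_i=(i+1)x_{i+1}$) and a submodule $Z\cong V_{2p+1}$ spanned by $z_i=(-1)^i\sum_{j+k=i}x_jx_k$, checks $Y\cap Z=0$ using the terms $x_0x_i$ and $x_{i-p}x_p$ in $z_i$, and compares dimensions. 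Finally $S^2(V_{p-1})\cong_M 0$ because it is a summand of $V_{p-1}\otimes V_{p-1}\cong_M V_1$ but has dimension $\tfrac12 p(p-1)\equiv 0\pmod p$. This explicit construction is precisely the ``new'' relation that cannot be deduced from the $V_2$-recursion.
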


\begin{proof}
We first note that since $p>2$ we have  $V_{p+1} \otimes V_{p+1} \cong S^2(V_{p+1}) \oplus \Lambda^2(V_{p+1})$. Now Proposition \ref{wildon} implies that $ \Lambda^2(V_{p+1}) \cong S^2(V_p)$. The latter is a direct summand of $V_p \otimes V_p$ which is projective relative to $M$. Therefore 
$V_{p+1} \otimes V_{p+1} \cong_M S^2(V_{p+1})$ and it is enough to show that $S^2(V_{p+1}) \cong_M V_{2p+1}$.

We claim that  $S^2(V_{p+1}) \cong S^2(V_{p-1}) \oplus V_{2p+1}$. We will exhibit a submodule of $V_{p+1}$ isomorphic to $V_{p-1}$. Let $x_0, x_1, \ldots, x_{p}$ be the basis of $V_{p+1}$ with respect to which the action of $\alpha \in G$ is given by the usual formula \eqref{dualaction}. For $i=0, \ldots p-2$ we set $y_i = (i+1)x_{i+1}$. Then we have
\begin{align*} \alpha \cdot y_i = (i+1) \alpha \cdot x_{i+1} &= (i+1) \sum_{k=0}^{p-1-i} \binom{i+k+1}{k}(-\alpha)^k x_{i+1+k}. \end{align*}
Note that the coefficient of $x_{p}$ on the right hand side is $\binom{p}{i+1} = 0 \mod p$, so the sum only needs to run to $p-2-i$. Thus 
\begin{align*}\alpha \cdot y_i &=  (i+1) \sum_{k=0}^{p-2-i} \binom{i+1+k}{k}(-\alpha)^k \frac{1}{i+1+k}y_{i+k} \\
&=  \sum_{k=0}^{p-2-i} \binom{i+k}{k}(-\alpha)^k y_{i+k} \end{align*}
as required.

It follows that $S^2(V_{p+1})$ contains a submodule $Y$ isomorphic to $S^2(V_{p-1})$, with basis $\{x_ix_j: i,j = 1, \ldots, p-1\}$.
Now we define, for each $i=0, \ldots, 2p$
\[z_i = (-1)^i \sum_{j+k=i}x_jx_k.\]

We claim that $z_0, z_1, \ldots, z_{2p}$ span a submodule of $S^2(V_{p+1})$ isomorphic to $V_{2p+1}$. 
We have, for all $i=0, \ldots, 2p$

\begin{align*}
\alpha \cdot z_i &= (-1)^i \sum_{j+k=i}(\alpha \cdot x_j)(\alpha \cdot x_k)\\
&= (-1)^i \sum_{j+k=i}\left(\sum_{t=0}^p \binom{t}{j} (-\alpha)^{t-j} x_t \right) \left(\sum_{s=0}^p \binom{s}{k} (-\alpha)^{s-k} x_s \right)\\
&= (-1)^i \sum_{j+k=i} \sum_{r=0}^{2p} \sum_{s+t=r}\left(\binom{t}{j} (-\alpha)^{t-j} x_t \right) \left(\binom{s}{k} (-\alpha)^{s-k} x_s \right)\\
&= (-1)^i \sum_{j+k=i} \sum_{r=0}^{2p} (-\alpha)^{r-i} \sum_{s+t=r} \binom{t}{j}\binom{s}{k}  x_s x_t\\
&= (-1)^i  \sum_{r=0}^{2p} (-\alpha)^{r-i} \sum_{s+t=r}\left(  \sum_{j+k=i} \binom{t}{j}\binom{s}{k} \right)  x_s x_t\\
\intertext{by Lemma \ref{morecombs} below}
&= (-1)^i  \sum_{r=0}^{2p} (-\alpha)^{r-i} \sum_{s+t=r} \binom{r}{i} x_s x_t\\
&=\sum_{r=0}^{2p} (-\alpha)^{r-i} \binom{r}{i} z_r
\end{align*}
as required.

This proves our claim. Now to finish the proof, observe that $Z \cap Y = \{0\}$, since $z_i$ contains a unique term $2x_0x_i$ for $i \leq p$ or $2x_{i-p}x_p$ for $i>p$, none of which are contained in any term in $Y$ (again we need $p>2$ here). Thus, since $\dim(Z)+\dim(Y) = 2p+1+\frac12 p(p-1) = \frac12 (p+1)(p+2) = \dim(S^2(V_{p+1}))$ we have $S^2(V_{p+1}) = Y \oplus Z \cong S^2(V_{p-1}) \oplus V_{2p+1}$.

Finally we claim that $S^2(V_{p-1}) \cong_M 0$. To see this, note that $S^2(V_{p-1})$ is a direct summand of $V_{p-1} \otimes V_{p-1}$. By Corollary \ref{newtensorformula} we have $V_{p-1} \otimes V_{p-1} \cong_M V_1$. But as $\dim( S^2(V_{p-1}) ) = \frac12 p(p-1) = 0 \mod p$, we cannot have 
$S^2(V_{p-1}) \cong_M V_1$ so we must have  $S^2(V_{p-1}) \cong_M 0$ as required. Alternatively we can use the isomorphism $S^2(V_{p-1}) \cong \Lambda^2(V_p)$, and the latter is a summand of $V_p \otimes V_p$. This completes the proof.
\end{proof}

\begin{Lemma}\label{morecombs}
For all $i,s,t$ we have  $\sum_{j+k=i} \binom{t}{j}\binom{s}{k} =  \binom{s+t}{i}$.
\end{Lemma}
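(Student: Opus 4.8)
The plan is to recognise this as Vandermonde's convolution identity and prove it by comparing coefficients in a product of binomial expansions. Both sides are integers, and the identity as used in the proof of Lemma~\ref{notenough} holds over $\mathbb{Z}$ and hence after reduction modulo $p$, so it suffices to establish it in $\mathbb{Z}[x]$.

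First I would expand, via the binomial theorem, $(1+x)^t = \sum_{j \geq 0}\binom{t}{j}x^j$ and $(1+x)^s = \sum_{k \geq 0}\binom{s}{k}x^k$. Multiplying these two polynomials and collecting the coefficient of $x^i$ in the product $(1+x)^t(1+x)^s$, one reads off exactly $\sum_{j+k=i}\binom{t}{j}\binom{s}{k}$, which is the left-hand side of the claimed identity.

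Next I would observe that $(1+x)^t(1+x)^s = (1+x)^{s+t}$, whose coefficient of $x^i$ is $\binom{s+t}{i}$ by a further application of the binomial theorem. Equating the two expressions for the coefficient of $x^i$ yields the statement. (Alternatively, a purely combinatorial argument works: $\binom{s+t}{i}$ counts the $i$-element subsets of a disjoint union $A \sqcup B$ with $|A|=t$ and $|B|=s$, and classifying such a subset by the number $j$ of its elements lying in $A$, so that $k=i-j$ lie in $B$, gives precisely $\sum_{j+k=i}\binom{t}{j}\binom{s}{k}$.)

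There is no genuine obstacle here; this is a classical identity, and the only point worth flagging is that, since it is valid over $\mathbb{Z}$, it may be freely used in the characteristic-$p$ computation inside the proof of Lemma~\ref{notenough} after reducing modulo $p$.
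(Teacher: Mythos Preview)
Your proof is correct; this is indeed Vandermonde's identity, and your generating-function argument establishes it over $\mathbb{Z}$, which is all that is needed. The paper's own proof is precisely the combinatorial double-counting argument you sketch as an alternative (choosing $i$ elements from a disjoint union of sets of sizes $t$ and $s$), so your parenthetical remark already matches the paper's approach exactly; your primary route via coefficients of $(1+x)^t(1+x)^s=(1+x)^{s+t}$ is an equally standard and perfectly acceptable variant.
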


\begin{proof}
The expression $ \binom{t}{j}\binom{s}{k} $ on the left hand is the number of ways of choosing $j$ things from a set of $t$ things and a further $k$ things from a set of $s$ things. Clearly if we sum this over all $j,k$ with $j+k=i$ we get the number of ways of choosing $i$ things from a set of $s+t$ things.
\end{proof}

The result above can be easily adapted to show that, for all $k<p$, $S^k(V_{p+1})$ contains a submodule isomorphic to $V_{pk+1}$. But it is not clear that this is a direct summand - the complement is not just $S^k(V_{p-1})$ as the dimension is too small. 

If Conjecture \ref{conj} were true we could use it, along with some standard facts about the Heller operator, to decompose $V_i \otimes V_j$ up to summands projective relative to $M$, for any $i,j<q$. The following example gives an idea of how this might work in general:

\begin{eg} Let $q=9$. Then the following table describes the decompositions of $V_i \otimes V_j$ modulo $V_3 \oplus V_6 \oplus V_9$ whenever $i,j<q$ and $i,j$ are not divisible by $p$:

\vspace{1cm}
\begin{center}
\begin{tabular}{c|cccccc}
 $\otimes$ & $V_1$ & $V_2$ & $V_4$ & $V_5$ & $V_7$ & $V_8$\\
\hline
$V_1$ & $V_1$ & $V_2$ & $V_4$ & $V_5$ & $V_7$ & $V_8$\\
$V_2$ & $V_2$ & $V_1$ & $V_5$ & $V_4$ & $V_8$ & $V_7$\\
$V_4$ & $V_4$ & $V_5$ & $V_7$ & $V_8$ & $\Omega^{-1}(V_5)$ & $\Omega^{-1}(V_4)$\\
$V_5$ & $V_5$ & $V_4$ & $V_8$ & $V_7$ & $\Omega^{-1}(V_4)$ & $\Omega^{-1}(V_5)$\\
$V_7$ & $V_7$ & $V_8$ & $\Omega^{-1}(V_5)$ & $\Omega^{-1}(V_4)$ & $\Omega^{-1}(V_8)$ & $\Omega^{-1}(V_7)$\\
$V_8$ & $V_8$ & $V_7$ & $\Omega^{-1}(V_4)$ & $\Omega^{-1}(V_5)$ & $\Omega^{-1}(V_7)$ & $\Omega^{-1}(V_8)$\\

\end{tabular}
\end{center}
\vspace{1cm}
In the table above, the tensor products involving $V_1$ are obvious, and those involving $V_2$ are a consequence of Theorem \ref{tensorformula}. The calculation $V_4 \otimes V_4 \cong_M V_7$ is a consequence of Lemma \ref{notenough}. Now we have
\[V_4 \otimes (V_4 \otimes V_2) \cong V_4 \otimes (V_3 \oplus V_5) \cong_M V_4 \otimes V_5\]
and
\[(V_4 \otimes V_4) \otimes V_2 \cong_M V_7 \otimes V_2 \cong V_8 \oplus V_6 \cong_M V_8.\]
from which we find $V_4 \otimes V_5 \cong V_8$. Similarly
\[V_5 \otimes (V_4 \otimes V_2) \cong V_5 \otimes (V_3 \oplus V_5) \cong_M V_5 \otimes V_5 \]
and
\[(V_5 \otimes V_4) \otimes V_2 \cong_M V_8 \otimes V_2 \cong V_7 \oplus V_9 \cong_M V_7\]
from which we find $V_5 \otimes V_5 \cong V_7$. The remainder of the table can be filled in using $V_8= \Omega^{-1}(V_1)$, $V_7 = \Omega^{-1}(V_2)$ and Propostion \ref{omegaomnibus}(iv).

\end{eg}

\bibliographystyle{plain}
\bibliography{MyBib.bib}

\end{document}